\newtheorem{thm}{Theorem}[section]
\newtheorem*{thm*}{Theorem}
\newtheorem{lem}[thm]{Lemma}
\newtheorem{cor}[thm]{Corollary}
\newtheorem{prop}[thm]{Proposition}
\theoremstyle{definition}
\newtheorem{defin}[thm]{Definition}
\newtheorem*{rmk}{Remark}
\newcommand{\Z}{\mathbb Z}
\renewcommand{\hom}{\mathrm{Hom}}
\newcommand{\f}{\mathbf{f}}
\renewcommand{\r}{\mathbf{r}}
\renewcommand{\c}{\mathbf{c}}
\newcommand{\g}{\mathbf{g}}
\newcommand{\mono}{\: \ar@{^{(}->}}
\newcommand{\epi}{\ar@{->>}}
\renewcommand{\lim}{\mathrm{lim}}
\newcommand{\defeq}{\coloneqq}
\newcommand{\xym}{\xymatrix}
\newcommand{\ra}{\, \Rightarrow\, }
\newcommand{\ab}{\mathrm{ab}}
\newcommand{\CC}{\EuScript C}
\newcommand{\OO}{\EuScript O}
\newcommand{\FF}{\mathcal F}
\newcommand{\ZZ}{\EuScript Z}
\newcommand{\BB}{\EuScript B}
\newcommand{\GG}{\mathcal G}
\renewcommand{\tilde}{\widetilde}
\newcommand{\mr}{\mathrm}
\renewcommand{\ab}{\mathrm{Ab}}
\def\BB{\text{\,\sf\reflectbox{B}}}
\def\xyma{\xymatrix@M.7em}
\begin{document}
\title{Limits, standard complexes and ${\bf fr}$-codes}
\author{Sergei O. Ivanov}
\address{Laboratory of Modern Algebra and Applications, St. Petersburg State University, 14th Line, 29b,
Saint Petersburg, 199178 Russia}\email{ivanov.s.o.1986@gmail.com}
\author{Roman Mikhailov}
\address{Laboratory of Modern Algebra and Applications, St. Petersburg State University, 14th Line, 29b,
Saint Petersburg, 199178 Russia and St. Petersburg Department of
Steklov Mathematical Institute} \email{rmikhailov@mail.ru}
\author{Fedor Pavutnitskiy}
\address{Laboratory of Modern Algebra and Applications, St. Petersburg State University, 14th Line, 29b,
Saint Petersburg, 199178 Russia} \email{fedor.pavutnitskiy@gmail.com}
\begin{abstract}
For a strongly connected category $\CC$ with pair-wise coproducts, we introduce a cosimplicial object, which serves as a sort of resolution for computing higher derived functors of
${\sf lim} : \ab^{\CC}\to \ab$. Applications involve K\"unneth theorem for higher limits and ${\sf lim}$-finiteness of $\f\r$-codes. A dictionary for the $\f\r$-codes with words of length $\leq 3$ is given.
\end{abstract}
\thanks{
The authors are supported by the Russian Science Foundation grant N 16-11-10073.}
\maketitle
\section{Introduction}
Let $G$ be a group. By $\mr{Pres}(G)$ we denote the \textit{category of presentations} of $G$ with objects being free groups $F$ together with epimorphisms to $G$.
Morphisms are group homomorphisms over $G$. For a functor $\mathcal F:\mr{Pres}(G)\to \mr{Ab}$ from the category $\mr{Pres}(G)$ to the category of abelian groups, one can
consider the (higher) limits ${\sf lim}^i\, \mathcal F,\ i\geq 0,$ over the category of presentations.
The limits ${\sf lim}^i\, \mathcal F$ are
studied in the series of papers \cite{ivanov2015higher}, \cite{ivanov2017fr}, \cite{mikhailov2019dimension}, \cite{mikhailov2016generalized}, \cite{mikhailov2018free}.

Let $\mr{Ring}$ be the category of rings. The group ring functor $\Z[-]:\mr{Pres}(G)\to\mr{Ring}, \ (F\twoheadrightarrow G)\mapsto \Z[F]$ has two functorial ideals $\f$ and $\r$ defined as
$$
\f (F\twoheadrightarrow G)= {\sf ker}\{\Z[F]\to\Z\}, \ \r (F\twoheadrightarrow G)= {\sf ker}\{\Z[F]\to\Z[G]\}
$$
For different products of ideals $\f$ and $\r$, their sums and intersections, like
\begin{equation}\label{fr-code_example}
\f\r+\r\f, \ \r^2\cap \f^3
\end{equation}
one can consider their higher limits. It turns out, such limits, which depend functorially on $G$, cover a rich collection of various functors on the category of groups, including certain homological functors,
derived functors etc.

(Finite) sums of monomials formed from letters $\f$ and $\r$ we call ${\bf fr}${\it-sentences} or ${\bf fr}${-codes}.
By {\it translation} we mean a description of the functors ${\sf lim}^i\left({\bf fr}-\mathrm{code}\right)$, $i\geq 1$, ${\bf fr-}\mathrm{codes}$ viewed as
functors $\mr{Pres}(G)\to \mr{Ab}$. For the moment we do not have a unified method of translation of a given code and, in every new case, in order to translate a code,
we find specific tricks. At the end of the paper we present a {\it dictionary} of all nontrivial translations of codes with monomials of length $\leq 3$. In order to illustrate the
diversity of functors which appear in this way, we give the following examples:
\begin{align*}
& {\sf lim}^1({\bf rff+frr})={\sf Tor}(H_2(G),G_{ab}),\\
& {\sf lim}^1({\bf rr+frf+rff})=H_2(G,G_{ab}),\\
& {\sf lim}^1({\bf rr+frf})=H_3(G),\\
& {\sf lim}^2({\bf rr+frf})={\bf g}\otimes_{\mathbb Z[G]}{\bf g}.
\end{align*}
Here $H_i(G)$ is the $i$th integral homology of $G$, ${\bf g}$ the augmentation ideal in $\mathbb Z[G]$, $G_{ab}$ the abelianization of $G$.

Since the category $\mr{Pres}(G)$ is strongly connected, the ${\sf lim}^1({\bf fr}\mr{-code})$ has a natural interpretation as the maximal constant subfunctor of $\f/({\bf fr}\mr{-code})$ (see \cite{ivanov2015higher}, \cite{ivanov2017fr}). For example,
\begin{align*}
& {\sf lim}^1{\bf rr+fff}={\sf lim}^0\frac{\bf f}{\bf rr+fff}=\frac{({\bf rf+fff})\cap ({\bf fr+fff})}{\bf rr+fff}={\sf Tor}(G_{ab}, G_{ab}),\\ 
& {\sf lim}^1{\bf rf+fr}={\sf lim}^0\frac{\bf f}{\bf rf+fr}=\frac{\bf ff}{\bf rf+fr}={\bf g}\otimes_{\mathbb Z[G]}{\bf g}. 
\end{align*}

The point of this theory (which we also call metaphorically as ${\bf fr}${-\it language}), is that the formal manipulations with codes in two letters may induce deep and
unexpected transformations of functors. Simple transformations of ${\bf fr}$-codes, like changing the symbol $\bf r$ by $\bf f$ in a certain place, adding a monomial to the ${\bf fr}$-code etc,
induce natural transformations
of (higher) limits determined by these ${\bf fr}$-codes. For example, the transformation of the ${\bf fr}$-codes
$$
{\bf rr+frf}\rightsquigarrow {\bf rr+frf+rff}
$$
induces the natural transformation of functors
$$
H_3(G)={\sf lim}^1({\bf rr+frf})\rightsquigarrow H_2(G,G_{ab})={\sf lim}^1({\bf rr+frf+frr}).
$$
Here the map $H_3(G)\to H_2(G, G_{ab})$ is constructed as $H_3(G)=H_2(G,{\bf g})\to H_2(G,{\bf g}/{\bf g}^2)=H_2(G,G_{ab}),$ where the last map is induced by the natural projection
${\bf g}\twoheadrightarrow {\bf g}/{\bf g}^2=G_{ab}.$

This paper has two main parts. The first part is more abstract, we prove that any (finite) $\f\r$-code has only finite number of non-zero higher limits
(see Theorem \ref{fr-codes_are_finite}). In order to prove this statement, we develop a general theory of standard complexes constructed for elements of categories with pairwise
coproducts (such as our category $\mr{Pres}(G)$). More precisely, for any object $c$ of a category with pairwise coproducts we introduce a cosimplicial object $\BB(c),$ such that,
for any functor $\mathcal F$ from our category to abelian groups, the (higher) limits ${\sf lim}^i \mathcal F$ are naturally isomorphic to the cohomotopy groups $\pi^i\mathcal F\BB(c)$
(Theorem \ref{higher_limits_as_cohomology}). It follows from Theorem \ref{fr-codes_are_finite} that, given a ${\bf fr}$-code, the number of its non-zero
higher limits is finite. In the second part we present concrete translations.
We form a dictionary of the various $\f\r$-codes using spectral sequences,
Gr\"unberg resolution, K\"unneth-type formulas and collections of tricks. Observe that, not all $\f\r$-codes can be translated using homological algebra only, in some cases (like the
case $\bf rr+ffr+frf+rff$),
nontrivial statements from the theory of groups and group rings are useful.

\section{The standard complex}
\begin{defin}\label{strongly_connected_pairwise_coproducts}
 A category $\CC$ is called \textit{strongly-connected} if for any two objects $c,c'\in \CC$
$$
\hom_{\CC}(c,c')\neq \emptyset .
$$
Moreover, if for any $c,c'\in \CC$ there exists coproduct $c\sqcup c'$, we say that $\CC$ is a \textit{category with pairwise coproducts} (i.e. with finite non-empty coproducts).
\end{defin}

\begin{defin}\label{higher_limits}
Let $\mathcal F: \CC\to \mr{Ab}$ be a functor. The \textit{Higher limits} ${\sf lim}^i \,\mathcal F$ of $\mathcal F$ are the right derived functors of the limit functor :
$$
{\sf lim}^i\, \mathcal F =\mathbb R^i {\sf lim} \, \mathcal F, \hspace{1cm} {\sf lim}: \mr{Ab}^{\CC}\to \mr{Ab}.
$$

\end{defin}
We will assume that in the functor category there are enough injective objects, so higher limits of any functor exists, provided $\CC$ is small. In a general case, as in section \eqref{fr-codes_section}, where $\CC=\mr{Pres}(G)$, the existence of higher limits for functors of interest can be established, using Grothendieck-Tarsky theory, as in \cite{ivanov2017fr}.

For a cochain complex of functors the relation between higher limits of its terms and limits of its cohomology is given by the following spectral sequence.
\begin{prop}[\cite{ivanov2015higher}, (2.5), (2.6)]\label{lim_spectral_sequence}
Let $\mathcal F^{\bullet}$ be a bounded below cochain complex of functors with ${\sf lim}$-acyclic cohomology. Then there exists a convergent spectral sequence
\begin{equation}
E_1^{p,q} = {\sf lim}^q\, \mathcal F^p\ra {\sf lim} \, H^{p+q}(\mathcal F^{\bullet})
\end{equation}
with the differential on the first page induced by the differential of $\mathcal F^{\bullet}$.
\end{prop}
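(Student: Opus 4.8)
The plan is to realise the asserted spectral sequence as the hyper-derived-functor spectral sequence of $\mathsf{lim}$ applied to the cochain complex $\mathcal F^{\bullet}$. Since by our standing assumption the functor category $\mathrm{Ab}^{\CC}$ has enough injectives, I would first choose a Cartan--Eilenberg resolution $\mathcal F^{\bullet}\to\mathcal I^{\bullet\bullet}$: a double complex of injective functors concentrated in the region $q\ge 0$ and, because $\mathcal F^{\bullet}$ is bounded below, in $p\ge p_{0}$, such that the augmented columns $0\to\mathcal F^{p}\to\mathcal I^{p,\bullet}$ are injective resolutions and, moreover, the complexes $Z^{p}_{\mathrm h}(\mathcal I^{\bullet\bullet})$, $B^{p}_{\mathrm h}(\mathcal I^{\bullet\bullet})$, $H^{p}_{\mathrm h}(\mathcal I^{\bullet\bullet})$ of horizontal cocycles, coboundaries and cohomology (taken as complexes in the vertical direction) are injective resolutions of $Z^{p}(\mathcal F^{\bullet})$, $B^{p}(\mathcal F^{\bullet})$, $H^{p}(\mathcal F^{\bullet})$ respectively. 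Applying $\mathsf{lim}$ term by term yields a bounded-below double complex $D^{\bullet\bullet}=\mathsf{lim}\,\mathcal I^{\bullet\bullet}$ of abelian groups, and I would compare the two spectral sequences of its total complex $\mathrm{Tot}(D)$, both of which converge to $H^{*}(\mathrm{Tot}(D))$ because $D^{\bullet\bullet}$ lives in a quadrant-like region.

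Next I would compute the column-filtration spectral sequence, taking vertical cohomology first. Then ${}'E_{1}^{p,q}=H^{q}(\mathsf{lim}\,\mathcal I^{p,\bullet})$; since $\mathcal I^{p,\bullet}$ is an injective resolution of $\mathcal F^{p}$ and, by Definition \ref{higher_limits}, higher limits are the right derived functors of $\mathsf{lim}$ computed by injective resolutions, this equals $\mathsf{lim}^{q}\,\mathcal F^{p}$, with $d_{1}$ the map induced by the differential of $\mathcal F^{\bullet}$. This is precisely the claimed first page, so it only remains to identify the abutment $H^{*}(\mathrm{Tot}(D))$.

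For that I would run the row-filtration spectral sequence, taking horizontal cohomology first. The key point is that $\mathsf{lim}$ commutes with horizontal cohomology of $\mathcal I^{\bullet\bullet}$: in a Cartan--Eilenberg resolution the short exact sequences $0\to Z^{p}_{\mathrm h}\to\mathcal I^{p,q}\to B^{p+1}_{\mathrm h}\to 0$ and $0\to B^{p}_{\mathrm h}\to Z^{p}_{\mathrm h}\to H^{p}_{\mathrm h}\to 0$ are degreewise split, their right-hand terms being injective, so the additive functor $\mathsf{lim}$ keeps them exact and gives $H^{p}_{\mathrm h}(\mathsf{lim}\,\mathcal I^{\bullet\bullet})\cong\mathsf{lim}\,H^{p}_{\mathrm h}(\mathcal I^{\bullet\bullet})$. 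As $H^{p}_{\mathrm h}(\mathcal I^{\bullet\bullet})$ is an injective resolution of $H^{p}(\mathcal F^{\bullet})$, the second page is ${}''E_{2}^{p,q}=\mathsf{lim}^{q}\,H^{p}(\mathcal F^{\bullet})$. By the $\mathsf{lim}$-acyclicity hypothesis this vanishes for $q>0$ and equals $\mathsf{lim}\,H^{p}(\mathcal F^{\bullet})$ for $q=0$, so the row spectral sequence is concentrated on a single line, degenerates, and gives $H^{n}(\mathrm{Tot}(D))\cong\mathsf{lim}\,H^{n}(\mathcal F^{\bullet})$ with no extension ambiguity. Plugging this into the column spectral sequence yields $E_{1}^{p,q}=\mathsf{lim}^{q}\,\mathcal F^{p}\ra\mathsf{lim}\,H^{p+q}(\mathcal F^{\bullet})$, as required.

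The only genuinely delicate input is the existence of the Cartan--Eilenberg resolution with the listed resolution properties; this is standard once there are enough injectives, the crucial device being the degreewise splittings furnished by injectivity of $B^{p}_{\mathrm h}$ and $H^{p}_{\mathrm h}$. In the case $\CC=\mr{Pres}(G)$, which is not small and for which $\mathrm{Ab}^{\CC}$ need not have honest injectives, I would carry out the same bookkeeping using $\mathsf{lim}$-acyclic resolutions supplied by the Grothendieck--Tarski argument of \cite{ivanov2017fr} (or work inside a suitable small full subcategory); the spectral-sequence manipulations are unaffected. Convergence in all cases is ensured by the boundedness below of $\mathcal F^{\bullet}$, which makes $\mathrm{Tot}(D)$ a bounded-below total complex so that each filtration is finite in every total degree.
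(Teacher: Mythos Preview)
Your argument is correct: this is the standard construction of the two hypercohomology spectral sequences via a Cartan--Eilenberg resolution, and your identification of the $E_1$-page, the abutment (using the row spectral sequence together with the $\mathsf{lim}$-acyclicity of $H^p(\mathcal F^\bullet)$), and convergence from boundedness below are all handled cleanly. Note, however, that the paper does not supply its own proof of this proposition---it is simply quoted from \cite{ivanov2015higher}, (2.5), (2.6)---so there is no in-paper argument to compare against; your approach is precisely the expected one and almost certainly coincides with the one in the cited reference.
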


\begin{rmk}
For a functor $\mathcal F$ consider its subfunctor of the \textit{invariants} ${\sf inv}\, \mathcal F: \CC\to \mr{Ab}$:
\begin{equation}\label{defin_of_invariants}
{\sf inv}\,\mathcal F(c)=\{x\in \mathcal F(c)|\forall\,c'\in\CC, \ \varphi, \psi:c\to c',\ \mathcal F(\varphi)(x)=\mathcal F(\psi)(x)\}.
\end{equation}
In strongly connected categories this functor is constant and its value is equal to ${\sf lim}\,\mathcal F$, see (4.1) in \cite{ivanov2017fr}.
Moreover, it is known \cite{mikhailov2019dimension} that the limit of a functor from a strongly connected category with pair-wise coproducts is equal to the equalizer
$${\sf lim}\, \mathcal F \cong {\rm eq}(\mathcal F(c)\rightrightarrows \mathcal F(c\sqcup c))) $$
for any $c\in \CC$.
 In particular, this equalizer does not depend on $c$.
\end{rmk}

To generalize the relation between limits and invariants to the level of derived functors we introduce the following notion:
\begin{defin}\label{defin_of_standard_complex}
For $c\in\CC$ consider the following cosimplicial object $\BB:\Delta\to \CC$, which we will call the \textit{standard complex} associated with $c$:
$$
\BB (c)^n=\bigsqcup_{j=0}^{n} c, $$
$$  \BB (c) ([n]\xrightarrow f [m])= \bigsqcup_{j=0}^{n} c \xrightarrow{(i_{f(0)},\dots ,i_{f(n)})} \bigsqcup_{k=0}^{m} c
$$
here $i_j: c\to \bigsqcup_{k=0}^{m} c, \, 0\leq j\leq m$ are canonical inclusions and notation $(g_0,\dots ,g_n), \ g_k: c\to c'$ stands for the unique map $c^{\sqcup\, n+1} \to c'$ such that $(g_0,\dots ,g_n)\circ i_j = g_j$.
\end{defin}
By definition, cofaces and codegeneracies of $ \BB(c)$
$$d^j:c^{\sqcup\, n+1} \to c^{\sqcup\, n+2}, \hspace*{1cm} s^j: c^{\sqcup\, n+1} \to c^{\sqcup\, n} $$
 are given by
\begin{equation}\label{cofaces_and_codegenracies_in_the_standard_complex}
d^j = (i_0, \dots , \hat i_j, \dots ,i_{n+1}), \ s^j = (i_{0} ,\dots , i_{j},i_{j} ,\dots ,i_{n}).
\end{equation}
This complex is very similar to the so-called \textit{canonical resolution}, associated with the monad  $(c\sqcup(-),\nabla, i_2)$, here $\nabla: c\sqcup c\sqcup (-)\xrightarrow{(i_1,i_2)\sqcup \mr{id}} c\sqcup (-)$, see \cite{weibel1995introduction}  (8.6.8). This similarity will become an identification, if there is an initial object $0$ in $\CC$. In this case though all higher limits of the functor$\mathcal F:\CC\to \mr{Ab}$ are trivial, provided $\mathcal F(0)=0$. Alternatively, since $(\CC,\sqcup)$ can be considered as a strong monoidal category (without unit) and every object is a monoid with respect to this structure, for any $c$ the standard resolution $\BB(c)$ can be considered as a unique monoidal functor $\Delta\to \CC$ which sends $[0]$ to $c$, as in (7.5) of \cite{mac2013categories}.

Now we will study some homotopical properties of the standard complex $\BB(c)$.
\begin{defin}[\cite{meyer1990cosimplicial}, (2.1)]
Let $f,g: X\to Y$ be two morphisms between cosimplicial objects $X$ and $Y$. A \textit{cosimplicial homotopy} between $f$ and $g$ is a collection of maps $k^i: X^{n+1}\to Y^n, \ 0\leq i \leq n$, satisfying the following identities:
\begin{gather}
k^0 d^0 = g, \ k^n d^{n+1} = f\\
k^jd^i =
\begin{cases}
d^ik^{j-1}, & \ i<j \\  \label{homotopy_coface_identity1}
k^{j-1}d^j, & \ i=j>0 \\
d^{i-1}k^j, & \ i>j+1
\end{cases}\\
k^j s^i =
\begin{cases}
s^ik^{j+1}, & \ i\leq j \\
s^{i-1}k^j, & i>j
\end{cases}
\end{gather}
\end{defin}
We will use the following definition of the Moore complex and the alternate sum complex for the abelian case, which are dualizations of the standard definitions, as in \cite{goerss-jardine}:
\begin{defin}
Let $A$ be a cosimplicial object in an abelian category $\CC$
\begin{itemize}
\item The \textit{Moore complex} $QA$ of $A$ is a cochain complex
$$
(QA)^n={\sf coker}\{\bigoplus_{i=1}^n A^{n-1}\xrightarrow{d^i} A^n\}
$$

\item The \textit{alternate sum complex} $CA$ of $A$ is a cochain complex
$$
CA^n = A^n, \ d = \sum_{i=0}^{n+1}(-1)^i d^i
$$
\end{itemize}
\end{defin}
Both constructions are functorial, with $Q: \CC^{\Delta}\to \mr{Ch}_{\leq 0}(\CC)$ being an exact functor, and as in the simplicial case, these two complexes are chain homotopic to each other. Since a cosimplicial homotopy $\{k^i\}_{i=0}^{\infty}$ between $f$ and $g$ induces a chain homotopy
$$
k = \sum_{i=0}^n (-1)^i k^i
$$
between $Cf$ and $Cg$, $Qf$ and $Qg$ are also homotopic.

The Moore complex $QA$ also has a convenient iterative description in terms of the  \textit{d'ecalage} of $A$, which is a cosimplcial object $\mr{Dec}\, A$ with the following structure:
$$
(\mr{Dec}\, A)^n = A^{n+1}, \ d^i_{ \mr{Dec}\, A} = d^{i+1}_A, \ s^j_{\mr{Dec}\, A} = s^{j+1}_A .
$$
\begin{prop}\label{iterative_description_of_moore_complex}
The following formula holds:
$$
(QA)^n = {\sf coker}\, \{ (QA)^{n-1}\xrightarrow{d^1} (Q\mr{Dec}\,A)^{n-1}\}.
$$
\end{prop}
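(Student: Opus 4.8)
The plan is to identify both sides of the claimed equality with explicit subquotients of the objects $A^m$ and then compare them. Since $\CC$ is abelian, for any cosimplicial object $X$ the cokernel of the map $\bigoplus_{i=1}^{m}X^{m-1}\to X^{m}$, $(x_i)\mapsto\sum_i d^i(x_i)$, is the quotient of $X^m$ by the internal sum $\sum_{i=1}^{m}\im d^i_X$ of the images; hence $(QA)^{m}=A^{m}/\sum_{i=1}^{m}\im d^i_A$. Unwinding the definition of the d\'ecalage, the cofaces of $\mr{Dec}\,A$ that enter the construction of $Q\mr{Dec}\,A$ in degree $n-1$ are $d^i_{\mr{Dec}\,A}=d^{i+1}_A$ for $1\le i\le n-1$, so the same description gives
$$(Q\mr{Dec}\,A)^{n-1}=A^{n}/U,\qquad U\defeq\sum_{j=2}^{n}\im d^j_A\subseteq A^n .$$
It then remains to check that $d^1_A$ induces a map $\bar d^1\colon (QA)^{n-1}\to(Q\mr{Dec}\,A)^{n-1}$ and that $\coker\bar d^1\cong A^{n}/\sum_{j=1}^{n}\im d^j_A$.

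First I would verify that $\bar d^1$ is well defined, i.e. that $d^1_A\colon A^{n-1}\to A^n$ sends the subobject $\sum_{i=1}^{n-1}\im d^i_A$ of $A^{n-1}$ into $U\subseteq A^n$. For $1\le i\le n-1$ the cosimplicial identity $d^1 d^i=d^{i+1}d^1$ (the case $a=1<b=i+1$ of $d^{b}d^{a}=d^{a}d^{b-1}$) yields $\im(d^1_A d^i_A)\subseteq\im d^{i+1}_A$, and since $2\le i+1\le n$ this lies inside $U$. This is the only place where the cosimplicial relations are used; everything else is formal.

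Finally, the image of $\bar d^1$ inside $(Q\mr{Dec}\,A)^{n-1}=A^n/U$ is $(\im d^1_A+U)/U$, so by the third isomorphism theorem (valid in any abelian category)
$$\coker\bar d^1=(A^n/U)\big/\big((\im d^1_A+U)/U\big)\cong A^n\big/(\im d^1_A+U)=A^n\big/\sum_{j=1}^{n}\im d^j_A=(QA)^n,$$
which is the assertion. I do not expect any genuine obstacle here: the argument reduces to the explicit forms of $Q$ and of $\mr{Dec}$, together with a single cosimplicial identity, and the one point requiring care is the bookkeeping of indices — in particular that passing to $\mr{Dec}\,A$ shifts $d^i\mapsto d^{i+1}$, so that $d^1_A$ is precisely the coface discarded when forming $Q\mr{Dec}\,A$, which is exactly why adjoining its image recovers $(QA)^n$.
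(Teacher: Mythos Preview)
Your proof is correct and follows essentially the same idea as the paper's: both observe that quotienting $A^n$ by the images of $d^1,\dots,d^n$ can be performed in two stages, first by $d^2,\dots,d^n$ (yielding $(Q\mr{Dec}\,A)^{n-1}$) and then by the induced $d^1$. The paper packages this as a diagram chase (``total cokernel equals sequential cokernel'' of a commutative square), whereas you carry it out explicitly with images and the third isomorphism theorem, making the use of the cosimplicial identity $d^1d^i=d^{i+1}d^1$ visible; this is only a difference in presentation.
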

\begin{proof}
Consider the following diagram:
$$
\xym{
	\bigoplus_{i=1}^{n-1}A^{n-2}\ar[r]^{d^i}\ar[d]^{d^1}&A^{n-1}\ar[d]^{d^1}\epi[r]&(QA)^{n-1}\ar[d]^{d^1}\\
	\bigoplus_{i=2}^{n-1}A^{n-1}\ar[r]^{d^i}&A^n\epi[rd]\epi[r]&(Q\mr{Dec}\, A)^{n-1}\epi[d]\\
	&&(QA)^n
	}
$$
The diagonal arrow here represents a map to a ``total'' cokernel of the square (the cokernel of the natural map from the push-out to the right-bottom corner), which is equal to a ``sequential'' cokernel, represented by the rightmost vertical arrows.
\end{proof}
Turns out, on a strongly connected $\CC$ the standard complex construction is constant up to homotopy:
\begin{thm}\label{hatC_homotopy_constant}
Let $\CC$ be a category with pair-wise coproducts. Then for any two maps $f,g : c\to c'$  the induced morphisms $\BB(c)\to \BB(c')$ are homotopic.
\end{thm}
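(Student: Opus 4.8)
The plan is to write down an explicit cosimplicial homotopy. The guiding observation is that each level $\BB(c)^n=c^{\sqcup\,n+1}$ is a coproduct of copies of $c$ indexed by $[n]=\{0,\dots,n\}$, and that all the relevant morphisms — the cofaces $d^i$ and codegeneracies $s^i$ of $\BB(c)$ and $\BB(c')$, the maps $\BB(f)$ and $\BB(g)$, and the homotopy operators still to be constructed — have the special shape
$$
\langle\alpha\mid e\rangle \defeq (i_{\alpha(0)}\circ e_0,\dots,i_{\alpha(n)}\circ e_n)\colon c^{\sqcup\,n+1}\to c'^{\sqcup\,m+1},
$$
determined by a \emph{reindexing} function $\alpha\colon[n]\to[m]$ together with a \emph{label} $e_j\in\{\id_c,f,g\}$ attached to the $j$-th summand. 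Every composite we need pairs a map whose labels are all $\id$ against a map with arbitrary labels, and for such pairs one has
$$
\langle\beta\mid\id\rangle\circ\langle\alpha\mid e\rangle=\langle\beta\alpha\mid e\rangle,\qquad \langle\beta\mid e'\rangle\circ\langle\alpha\mid\id\rangle=\langle\beta\alpha\mid e'\circ\alpha\rangle ,
$$
so the whole verification decouples into bookkeeping with reindexing functions on one side and with label functions on the other. In this notation $\BB(f)^n=\langle\id_{[n]}\mid f\rangle$ (constant label $f$), the coface $d^i$ equals $\langle\delta^i\mid\id\rangle$ for the usual coface map $\delta^i$, and $s^i=\langle\sigma^i\mid\id\rangle$ for the usual codegeneracy $\sigma^i$.

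Next I would define the homotopy operators $k^j\colon\BB(c)^{n+1}\to\BB(c')^n$, for $0\le j\le n$, by
$$
k^j=(i_0f,\dots,i_jf,\,i_jg,\,i_{j+1}g,\dots,i_ng)=\langle\sigma^j\mid\lambda^j\rangle ,
$$
where $\sigma^j\colon[n+1]\to[n]$ is the usual codegeneracy (so the value $j$ is taken twice) and $\lambda^j(\ell)=f$ for $\ell\le j$ while $\lambda^j(\ell)=g$ for $\ell>j$. Informally, $k^j$ collapses the two adjacent summands that are sent onto copy $j$, feeding the left one in through $f$ and the right one through $g$; as $j$ runs from $0$ to $n$ the ``wall'' separating the block of $f$-labels from the block of $g$-labels sweeps across, which is exactly what interpolates between $\BB(f)$ and $\BB(g)$.

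Finally I would verify the cosimplicial-homotopy identities. The two boundary equations are immediate: precomposing $k^0$ with $d^0$ deletes the unique $f$-labelled summand, leaving $\langle\id\mid g\rangle=\BB(g)^n$, and dually $k^nd^{n+1}=\langle\id\mid f\rangle=\BB(f)^n$. For each remaining relation, applying the two composition rules above splits the required equality into (i) an identity of reindexing functions, which is precisely one of the classical identities of the simplex category between the $\delta$'s and $\sigma$'s, and (ii) an identity of label functions, which reduces to the elementary fact that precomposing $\lambda^j$ with the relevant $\delta$ or $\sigma$ either lowers or raises the $f/g$ threshold by one unit or leaves it fixed, in exactly the pattern demanded by the axioms. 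I expect no genuine obstacle: the real content of the argument is to hit on the correct formula for $k^j$ — in particular the doubled summand at position $j$ and the correct side for the $f$- and $g$-blocks — after which every homotopy axiom matches a standard identity of $\Delta$ term by term.
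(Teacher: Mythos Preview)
Your proposal is correct and follows essentially the same approach as the paper: the homotopy operators $k^j=(i_0f,\dots,i_jf,i_jg,\dots,i_ng)$ are exactly the ones the authors use, and their verification likewise factors $k^j$ as the codegeneracy $s^j$ composed with the ``diagonal'' map $\alpha^j=f^{\sqcup j+1}\sqcup g^{\sqcup n+1-j}$, then reduces each axiom to a standard cosimplicial identity plus a check on how $\alpha^j$ interacts with cofaces and codegeneracies. Your $\langle\alpha\mid e\rangle$ bookkeeping is a clean way to package that same computation.
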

\begin{proof}
Consider the following collection of maps $\{k^i:\BB(c)^{n+1}\to \BB(c')^{n}\}_{i=0}^{ \infty}$:
\begin{equation}\label{fg_homotopy}
k^i = (i_0 f,\dots , i_{i} f, i_{i} g , \dots , i_{n} g) = s^i \alpha^i, \ \alpha^i\defeq \underbrace{f\sqcup\dots\sqcup f}_{i+1}\sqcup\underbrace{g\sqcup\dots\sqcup g}_{n+1-i}
\end{equation}
First we consider how $\alpha^j$ commutes with cofaces and codegeneracies. For fixed $i<j$:
\begin{gather*}
\alpha^jd^i = (i_0 f,\dots , i_{j} f, i_{j+1} g,\dots ,i_{n+1} g)(i_0,\dots ,\hat i_{i},\dots ,i_{n+1}) = (i_0 f,\dots ,\widehat{ i_{i} f},\dots, i_{j} f, i_{j+1} g,\dots ,i_{n+1} g)\\=
(i_0,\dots ,\hat i_{i},\dots ,i_{n+1})(i_0 f,\dots ,i_{j-1} f, i_{j} g,\dots ,i_{n} g) = d^i\alpha^{j-1}
\end{gather*}
For $i>j+1$:
$$
\alpha^jd^i =  (i_0 f,\dots ,i_{j} f, i_{j+1} g,\dots ,\widehat{i_{i} g},\dots ,i_{n+1} g)=d^i\alpha^j
$$
For codegeneracies if $i\leq j$:
\begin{gather*}
\alpha^js^i = (i_0 f,\dots ,i_{j} f, i_{j+1} g,\dots ,i_{n+1} g)(i_0,\dots i_{i}, i_{i},\dots ,i_{n})\\=
(i_0 f,\dots ,i_{i}f, i_{i} f,\dots ,i_{j} f, i_{j+1} g,\dots ,i_{n+1} g)=s^i\alpha^{j+1}
\end{gather*}
And similarly for $i>j$:
$$
\alpha^js^i=(i_0 f,\dots ,i_{j} f, i_{j+1} g,\dots ,i_{i} g, i_{i}g,\dots ,i_{n+1} g) = s^i \alpha^j
$$
Returning to $k^i$ and using the cosimplicial identities :
\begin{gather*}
k^jd^i = s^j\alpha^j d^i =
\begin{cases}
s^jd^i\alpha^{j-1}, & i<j\\
s^jd^i\alpha^j, & i>j+1
\end{cases}\\=
\begin{cases}
d^is^{j-1}\alpha^{j-1}, & i<j\\
d^is^j\alpha^j, & i>j+1
\end{cases}
=
\begin{cases}
d^ik^{j-1}, & i<j\\
d^ik^j, & i>j+1
\end{cases}\\
k^js^i= s^j\alpha^j s^i =
\begin{cases}
s^j s^i  \alpha^{j+1}, & i\leq j\\
s^ j s^i \alpha^i, & i<j
\end{cases}\\=
\begin{cases}
s^i s^{j+1}  \alpha^{j+1}, & i\leq j\\
s^{i-1} s^j \alpha^j, & i<j
\end{cases}=
\begin{cases}
s^i k^{j+1}, & i\leq j\\
s^{i-1} k^j, & i<j
\end{cases}
\end{gather*}
Finally we consider relations for $k^jd^j$ and the boundaries of the homotopy $k^0d^0, \ k^nd^{n+1}$:
\begin{gather*}
k^jd^j = (i_0 f,\dots ,i_{j} f, i_{j} g,\dots ,i_{n}g)(i_0,\dots ,\hat i_{j},\dots ,i_{n+1}) =
(i_0 f,\dots ,i_{j-1} f, i_{j} g,\dots ,i_{n}g)\\
=  (i_0 f,\dots ,i_{j-1} f, i_{j-1} g,\dots ,i_{n}g)(i_0,\dots ,\hat i_{j},\dots ,i_{n+1}) =k^{j-1}d^j \\
k^0d^0 = (i_0 f, i_0 g,\dots ,i_{n}g)(i_1,\dots ,i_{n+1}) = (i_0 g,\dots ,i_{n}g) = \underbrace{g\sqcup\dots\sqcup g}_{n+1} = \BB(g)^n\\
k^nd^{n+1} = (i_0 f,\dots ,i_{n} f, i_{n} g)(i_0\dots i_{n}) =  (i_0 f,\dots ,i_{n}f) = \underbrace{f\sqcup\dots\sqcup f}_{n+1} = \BB(f)^n
\end{gather*}
This shows that $\{k^i\}_{i=0}^{\infty}$ defined above is indeed a cosimplicial homotopy between $\BB(f)$ and $\BB(g)$.
\end{proof}

\begin{cor}\label{cohomology_independent_of_object}
Let $\mathcal F:\CC\to \ab$ be a functor on a strongly connected $\CC$ with pair-wise coproducts. Then the cohomology groups
$$
\pi^n \mathcal F\BB (c):=H^n C \mathcal F \BB (c)
$$
are independent of $c\in\CC$.
\end{cor}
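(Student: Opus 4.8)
The plan is to deduce this from Theorem \ref{hatC_homotopy_constant} together with the functoriality of the alternate sum complex construction and the homotopy invariance of cohomology of cochain complexes. The key observation is that Corollary \ref{cohomology_independent_of_object} does not merely assert an abstract isomorphism between $\pi^n\mathcal F\BB(c)$ and $\pi^n\mathcal F\BB(c')$, but a \emph{canonical} one, so the first thing I would make precise is exactly what ``independent of $c$'' means: for any morphism $\varphi: c\to c'$ in $\CC$ (which exists since $\CC$ is strongly connected) the induced map $C\mathcal F\BB(\varphi): C\mathcal F\BB(c)\to C\mathcal F\BB(c')$ is a quasi-isomorphism, and the induced isomorphism on $H^n$ is independent of the choice of $\varphi$; moreover these isomorphisms are compatible with composition, so they assemble into a genuine identification.

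First I would fix two objects $c,c'$ and two morphisms $\varphi,\psi: c\to c'$. By Theorem \ref{hatC_homotopy_constant} the cosimplicial maps $\BB(\varphi),\BB(\psi):\BB(c)\to\BB(c')$ are cosimplicially homotopic; applying the functor $\mathcal F$ (which preserves cosimplicial homotopies since they are built from the cosimplicial structure maps, which $\mathcal F$ sends to honest maps of abelian groups) we get a cosimplicial homotopy between $\mathcal F\BB(\varphi)$ and $\mathcal F\BB(\psi)$ as maps of cosimplicial abelian groups. As recorded in the discussion preceding Theorem \ref{hatC_homotopy_constant}, a cosimplicial homotopy $\{k^i\}$ induces a chain homotopy $k=\sum_i(-1)^ik^i$ between the corresponding maps of alternate sum complexes, so $C\mathcal F\BB(\varphi)$ and $C\mathcal F\BB(\psi)$ are chain homotopic, hence induce the same map $H^nC\mathcal F\BB(c)\to H^nC\mathcal F\BB(c')$. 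This gives a well-defined map $\Phi_{c,c'}^n:\pi^n\mathcal F\BB(c)\to\pi^n\mathcal F\BB(c')$ not depending on the chosen morphism $c\to c'$.

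Next I would check the cocycle/compatibility conditions. Functoriality of $\BB$ and of $C$ and $\mathcal F$ gives $C\mathcal F\BB(\psi\circ\varphi)=C\mathcal F\BB(\psi)\circ C\mathcal F\BB(\varphi)$ on the nose, so on cohomology $\Phi^n_{c',c''}\circ\Phi^n_{c,c'}=\Phi^n_{c,c''}$; and for $\varphi=\id_c$ we get $\Phi^n_{c,c}=\id$. Taking $c'=c''=c$ with any auxiliary object in between shows each $\Phi^n_{c,c'}$ is an isomorphism with inverse $\Phi^n_{c',c}$. Thus $\{\pi^n\mathcal F\BB(c)\}_{c\in\CC}$ with the transition isomorphisms $\Phi^n_{c,c'}$ forms a functor from $\CC$ to $\ab$ that is moreover constant (all structure maps are isomorphisms and the diagram commutes), which is the precise sense of the claim. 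The main obstacle is essentially bookkeeping rather than a deep point: one must be slightly careful that $\mathcal F$ really transports a cosimplicial homotopy to a cosimplicial homotopy, i.e. that the homotopy identities in the definition of cosimplicial homotopy are preserved by applying a functor to abelian groups — which is immediate since those identities are equations between composites of the structure maps — and that the sign-twisted sum $k=\sum(-1)^ik^i$ genuinely witnesses $C\mathcal F\BB(\varphi)-C\mathcal F\BB(\psi)=\d k+k\d$, which again follows from the cited computation. No new ideas beyond Theorem \ref{hatC_homotopy_constant} are needed.
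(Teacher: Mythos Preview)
Your argument is correct and is exactly the intended one: the corollary is stated in the paper without proof precisely because it follows immediately from Theorem \ref{hatC_homotopy_constant} by applying $\mathcal F$, passing to the alternate sum complex, and using that cosimplicial homotopies induce chain homotopies. Your extra care in spelling out that the resulting isomorphisms are canonical (independent of the chosen morphism and compatible with composition) is a welcome clarification, but there is nothing to add or correct.
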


\begin{rmk}\label{strongly_connected_is_essential}
If a category $\CC$ is not strongly connected, $\BB$ can be quite far from being homotopically constant, as the following example shows (see \cite{amitsur2001simple}, \cite{rosenberg1960amitsur}). Let $k$ be a field and $\CC=k\mr{-Alg}$ be a category of commutative $k$-algebras and a coproduct is given by a tensor product over $k$.
Let $\mathcal F = U:k\mr{-Alg}\to k\mr{-Mod}$ be a forgetful functor, then for $A\in k\mr{-Alg}$ the (coaugmented) alternate sum complex
\begin{gather*}
U\BB(A): \ k\to A\xrightarrow{d} A\otimes_k A\xrightarrow{d} A\otimes_k A\otimes_k A\to\dots   \\
 d : (a_1\otimes \dots \otimes a_k)\mapsto (a_1\otimes \dots a_{i-1}\otimes 1 \otimes a_i\otimes \dots\otimes a_k)
\end{gather*}
is called the \textit{Amitsur complex} and its cohomology broadly depends on $A$. For example, for $A=k$, $U\BB(k) = k$ and the complex is contractible. But for $A$ being a finite dimensional extension of $k$ it can be shown (see \cite{amitsur2001simple}) that  $H^2 (U\BB(A))$ is the Brauer group of the corresponding extension.
\end{rmk}

Let $\mathcal F:\CC\to\ab$ be a functor. Below we will study cocycles and (co)homotopy groups of the cosimplicial object $\mathcal F\BB(c)$.
\begin{lem}\label{cofaces_induce_iso}
Cofaces \eqref{cofaces_and_codegenracies_in_the_standard_complex} induce isomorphisms on higher limits of $\mathcal F$:
$$
{\sf lim}^m \, \mathcal F(\sqcup^{n+1} c)\xrightarrow{\mathcal F(d^j)^*} {\sf lim}^m \, \mathcal F(\sqcup^{n+2} c)
$$
\end{lem}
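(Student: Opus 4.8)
The plan is to reduce the statement, by two elementary manipulations, to a single clean assertion about coproduct inclusions, and then to isolate the one genuinely categorical input. Throughout I read ${\sf lim}^{m}\mathcal F(\sqcup^{n+1}c)$ as ${\sf lim}^{m}$ of the functor $\CC\to\ab$, $x\mapsto\mathcal F(\sqcup^{n+1}x)$, and I regard each coface $d^{j}$ and codegeneracy $s^{j}$ of $\BB(c)$ from \eqref{cofaces_and_codegenracies_in_the_standard_complex} as a natural transformation between the endofunctors $\sqcup^{k}(-)$ of $\CC$ (they are manifestly natural in the variable object), so that $\mathcal F(d^{j})$ is a morphism of functors inducing the map ${\sf lim}^{m}\mathcal F(\sqcup^{n+1}c)\to{\sf lim}^{m}\mathcal F(\sqcup^{n+2}c)$ in question. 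Precomposing a coface with a canonical inclusion $i_{0}\colon(-)\to\sqcup^{n+1}(-)$ yields, by the formula $d^{j}=(i_{0},\dots,\hat i_{j},\dots,i_{n+1})$, one of the canonical inclusions $(-)\to\sqcup^{n+2}(-)$; and any two canonical inclusions $(-)\to\sqcup^{N}(-)$ differ by a permutation of the summands, i.e.\ by an automorphism of $\sqcup^{N}(-)$. Hence, by the two-out-of-three property of isomorphisms, the lemma reduces to the claim $(\star)$: for every $N\ge 1$ and every $m\ge 0$, a canonical inclusion $i_{0}\colon\mathcal F\to\mathcal F(\sqcup^{N}(-))$ induces an isomorphism ${\sf lim}^{m}\mathcal F\xrightarrow{\ \cong\ }{\sf lim}^{m}\mathcal F(\sqcup^{N}(-))$.

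To establish $(\star)$, observe that the fold map $\nabla=(\id,\dots,\id)\colon\sqcup^{N}(-)\to(-)$ is natural and satisfies $\nabla\circ i_{0}=\id$, so $\mathcal F(\nabla)\circ\mathcal F(i_{0})=\id$ and $\mathcal F(i_{0})$ induces a split monomorphism ${\sf lim}^{m}\mathcal F\hookrightarrow{\sf lim}^{m}\mathcal F(\sqcup^{N}(-))$. Thus $\mathcal F(i_{0})$ induces an isomorphism on ${\sf lim}^{m}$ if and only if the idempotent $\mathcal F(i_{0})\circ\mathcal F(\nabla)$ of the functor $\mathcal F(\sqcup^{N}(-))$ — induced by the endomorphism of $\sqcup^{N}(-)$ that collapses every summand onto the first — acts as the identity on ${\sf lim}^{m}$. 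Setting $\mathcal Q=\coker\left(\mathcal F\xrightarrow{\ \mathcal F(i_{0})\ }\mathcal F(\sqcup^{N}(-))\right)$, which is a direct summand of $\mathcal F(\sqcup^{N}(-))$ in $\ab^{\CC}$ complementary to $\mathcal F$, this is in turn equivalent to the vanishing ${\sf lim}^{m}\mathcal Q=0$ for all $m\ge 0$.

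This acyclicity is the crux, and the only point at which strong-connectedness of $\CC$ is used in an essential way (cf.\ Remark~\ref{strongly_connected_is_essential}): conceptually it asserts that precomposition with $\sqcup^{N}(-)$ preserves $\mathbb R{\sf lim}$, equivalently that the comma category $c\downarrow\sqcup^{N}(-)$ has contractible nerve for each $c\in\CC$. I would prove it by exhibiting an explicit natural contracting homotopy patterned on the homotopy $k^{i}=s^{i}\alpha^{i}$ from the proof of Theorem~\ref{hatC_homotopy_constant}: interpolate between $\id_{\sqcup^{N}(-)}$ and $i_{0}\nabla$ through maps $\alpha^{i}$ that collapse progressively more summands onto the first, assemble these into a cosimplicial homotopy, and conclude ${\sf lim}^{m}\mathcal Q=0$ from the existence of such a natural contraction — the décalage and Moore-complex formalism of Proposition~\ref{iterative_description_of_moore_complex} being the natural bookkeeping device for the induction on $N$ and on the cosimplicial degree. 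An alternative is to verify connectedness, and then higher acyclicity, of the comma category $c\downarrow\sqcup^{N}(-)$ directly, joining two of its objects through their coproduct in the standard strong-connectedness zig-zag. I expect the verification of the homotopy identities for the contraction to be the main technical obstacle; and, as the Amitsur-complex example in Remark~\ref{strongly_connected_is_essential} shows, without strong-connectedness no such contraction exists, so this input cannot be avoided.
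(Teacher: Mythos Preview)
Your proposal is correct and follows essentially the same route as the paper: factor $d^{j}\circ i_{k}=i_{k'}$ through canonical inclusions to reduce to showing that each $i_{k}:c\to\sqcup^{N}c$ induces an isomorphism on ${\sf lim}^{m}$, then appeal to contractibility of the comma category $(\Phi_{N}\downarrow c')$ for $\Phi_{N}=\sqcup^{N}(-)$ --- which is exactly the argument the paper gives, citing Lemma~3.6 of \cite{ivanov2015higher} for the base case and sketching the general $N$. Your retraction-via-fold observation and the proposed explicit contracting homotopy are not in the paper, but they are natural ways to make that cited input self-contained; the paper simply asserts the comma-category contractibility without writing out the contraction.
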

\begin{proof}
 First two cofaces $i_1, i_2:c\to c\sqcup c$ in $\BB(c)$ are inducing isomorphisms
$$
{\sf lim}^n \,\mathcal F (c) \xrightarrow{\mathcal F(i_k)_*} {\sf lim}^n \, \mathcal F(c\sqcup c)
$$
by (3.6) in \cite{ivanov2015higher}. Modifying the proof of this lemma, one can see that the similar fact holds for all canonical inclusions $i_k:c\to \sqcup^n c$. This can be seen by considering a functor $\Phi_n: c\mapsto\sqcup^n c$ together with a natural transformation $i_k: \mr{id}\to \Phi_n$ such that for any $c'\in\CC$ the comma category  $(\Phi_n\downarrow c')$ is contractible. Now consider the diagrams
$$
\xym{
k<i+1:&&&&k\geq i+1:&&\\
\sqcup^{n+1}c\ar[rr]^{(i_0,\dots ,\hat i_{j},\dots ,i_{n+1})}&&\sqcup^{n+2}c && \sqcup^{n+1}c\ar[rr]^{(i_0,\dots ,\hat i_{j},\dots ,i_{n+1})}&&\sqcup^{n+2}c\\
&c\ar[ul]^{i_k}\ar[ur]_{i_k}& && &c\ar[ul]^{i_k}\ar[ur]_{i_{k+1}}&
}
$$
After applying $\mathcal F$ and ${\sf lim}^n$ diagonal arrows become isomorphisms, hence a horizontal arrow, which is a map, induced by coface, is an isomorphism too.
\end{proof}
Cocycles $\ZZ^n \mathcal F\BB(c)$ of the standard complex serve as a natural generalization of the functor of invariants \eqref{defin_of_invariants}:
\begin{lem}\label{defin_of_higher_invariants_are_cocycles_and}
For $c\in\CC$ the following formula holds:
\begin{equation}\label{higher_invariants_formula}
\ZZ^n \mathcal F\BB(c) =\{x\in \mathcal F(\sqcup^{n+1} c)| \forall c', \ \varphi_0,\dots,\varphi_{n+1}:c\to c' \\ \sum_{j=0}^{n+1} (-1)^j \mathcal F((\varphi_0,\dots ,\hat \varphi_j,\dots ,\varphi_{n+1}))(x)=0\}
\end{equation}
\end{lem}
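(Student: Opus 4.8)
The plan is to unwind the definition of the alternate sum complex and then verify the two containments of the claimed equality by elementary manipulations with coproducts. Concretely, $\ZZ^n\mathcal F\BB(c)$ is the kernel of the differential $d=\sum_{i=0}^{n+1}(-1)^i\mathcal F(d^i)\colon\mathcal F(\sqcup^{n+1}c)\to\mathcal F(\sqcup^{n+2}c)$, where the cofaces $d^i=(i_0,\dots,\hat i_i,\dots,i_{n+1})$ are those in \eqref{cofaces_and_codegenracies_in_the_standard_complex}. So what must be proved is that, for $x\in\mathcal F(\sqcup^{n+1}c)$, the single relation $\sum_{i=0}^{n+1}(-1)^i\mathcal F(d^i)(x)=0$ is equivalent to the relation $\sum_{j=0}^{n+1}(-1)^j\mathcal F\bigl((\varphi_0,\dots,\hat\varphi_j,\dots,\varphi_{n+1})\bigr)(x)=0$ holding for every $c'\in\CC$ and every family $\varphi_0,\dots,\varphi_{n+1}\colon c\to c'$.

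One direction is immediate: specialising to $c'=\sqcup^{n+2}c$ with $\varphi_j=i_j$ the canonical inclusions, the tuple $(\varphi_0,\dots,\hat\varphi_j,\dots,\varphi_{n+1})$ is, by definition, the coface $d^j$, so the universal condition at this single choice of data is exactly the cocycle equation. For the converse I would take an arbitrary $c'$ and family $\varphi_0,\dots,\varphi_{n+1}\colon c\to c'$, form the induced map $\Phi\defeq(\varphi_0,\dots,\varphi_{n+1})\colon\sqcup^{n+2}c\to c'$ (characterised by $\Phi\circ i_k=\varphi_k$), and establish for each $j$ the identity $\Phi\circ d^j=(\varphi_0,\dots,\hat\varphi_j,\dots,\varphi_{n+1})$. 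This is checked by precomposing both sides with the canonical inclusions $\iota_k\colon c\to\sqcup^{n+1}c$ and using that $d^j\circ\iota_k=i_k$ for $k<j$ and $d^j\circ\iota_k=i_{k+1}$ for $k\ge j$: in both cases one lands on the $k$-th entry of the hatted list. Then functoriality of $\mathcal F$ lets me pull $\mathcal F(\Phi)$ out of the alternating sum,
\[
\sum_{j=0}^{n+1}(-1)^j\mathcal F\bigl((\varphi_0,\dots,\hat\varphi_j,\dots,\varphi_{n+1})\bigr)(x)=\mathcal F(\Phi)\Bigl(\sum_{j=0}^{n+1}(-1)^j\mathcal F(d^j)(x)\Bigr),
\]
which vanishes as soon as $x$ is a cocycle.

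I do not expect a genuine obstacle here: the argument is a Yoneda-flavoured observation that $\sqcup^{n+2}c$ with its canonical inclusions is the universal $(n+2)$-tuple of morphisms out of $c$ and that the $d^j$ are the universal ``face'' tuples, so the cocycle relation, being functorial in $\mathcal F$, need only be imposed there. The one point demanding care is the bookkeeping of the index shift in $d^j$ against the removed entry $\hat\varphi_j$; everything else is formal. As a sanity check one can note that for $n=0$ the displayed description collapses to $\mathcal F(\varphi_0)(x)=\mathcal F(\varphi_1)(x)$ for all $\varphi_0,\varphi_1$, i.e.\ to the invariants functor \eqref{defin_of_invariants}, which is precisely the generalisation the lemma is meant to provide.
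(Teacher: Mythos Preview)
Your proposal is correct and follows essentially the same argument as the paper: both directions are handled exactly as you describe, with the key step being the factorisation $(\varphi_0,\dots,\hat\varphi_j,\dots,\varphi_{n+1})=\Phi\circ d^j$ for $\Phi=(\varphi_0,\dots,\varphi_{n+1})$, after which $\mathcal F(\Phi)$ is pulled out of the alternating sum. The paper's proof is slightly terser (it simply asserts this factorisation rather than verifying it on inclusions), but there is no substantive difference.
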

\begin{proof}
By definition,
$$
\ZZ^n \mathcal F\BB(c) = \{x\in \mathcal F(\sqcup^{n+1})| \sum_{j=0}^{n+1}(-1)^j \mathcal F(d^j)(x)=0\}
$$
Let's denote the right hand side of \eqref{higher_invariants_formula} by ${\sf inv}^n\,\mathcal F(c)$.  The inclusion ${\sf inv}^n\,\mathcal F(c)\subset \ZZ^n \mathcal F\BB(c)$ is obvious. Now for any collection of maps $\varphi_0,\dots, \varphi_{n+1}: c\to c'$ there is a unique morphism $\Phi=(\varphi_0,\dots, \varphi_{n+1}):\sqcup^{n+2}c\to c'$ such that $\varphi_j = \Phi i_j$ and moreover $(\varphi_0,\dots ,\hat \varphi_j,\dots ,\varphi_{n+1}) = \Phi\circ (i_0,\dots ,\hat i_j ,\dots ,i_{n+1})$. Hence for $x\in  \ZZ^n \mathcal F\BB(c)$:
\begin{gather*}
\sum_{j=0}^{n+1}(-1)^j \mathcal F((\varphi_0,\dots,\hat \varphi_j,\dots ,\varphi_{n+1}))(x)= \\
\sum_{j=0}^{n+1}(-1)^j \mathcal F(\Phi)\circ \mathcal F((i_0,\dots ,\hat i_j,\dots ,i_{n+1}))(x) =\\
 \mathcal F(\Phi) (\sum_{j=0}^{n+1}(-1)^j \mathcal F((i_0,\dots ,\hat i_{j},\dots ,i_{n+1}))(x)) = \mathcal F(\Phi)(0)=0
\end{gather*}
and $x\in {\sf inv}^n\, \mathcal F(c)$
\end{proof}
The gap between the higher invariants ${\sf inv}^n$ and the higher limits of the functor $\mathcal F$ is given by the coboundaries of $\mathcal F\BB(c)$ as the following theorem shows and hence the standard complex \eqref{defin_of_standard_complex} can be used as a sort of resolution for computing ${\sf lim}^n\, \mathcal F$:
\begin{thm}\label{higher_limits_as_cohomology}
For strongly connected category $\CC$ with pair-wise coproducts and a functor $\mathcal F: \CC\to \mr{Ab}$ for any $c\in \CC$
\begin{equation}
{\sf lim}^n\, \mathcal F=\pi^n \mathcal F\BB(c)
\end{equation}
\end{thm}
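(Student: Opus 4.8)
The plan is to identify ${\sf lim}^n\,\mathcal F$ with $\Ext^n_{\ab^{\CC}}(\underline{\Z},\mathcal F)$ and to compute the latter from an explicit projective resolution of the constant functor $\underline{\Z}$ read off from the standard complex $\BB(c)$. A natural transformation $\underline{\Z}\to\mathcal F$ is exactly a compatible family of elements, so ${\sf lim}=\hom_{\ab^{\CC}}(\underline{\Z},-)$ and hence ${\sf lim}^n=\mathbb{R}^n\hom_{\ab^{\CC}}(\underline{\Z},-)=\Ext^n_{\ab^{\CC}}(\underline{\Z},-)$ (legitimate under the standing assumption that these derived functors exist); by balancing of $\Ext$ I may compute it from a projective resolution of $\underline{\Z}$.

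First I would build the resolution. For any $a\in\CC$ the representable functor $\Z[\hom_{\CC}(a,-)]$ is projective in $\ab^{\CC}$: the free--forgetful adjunction and the Yoneda lemma give $\hom_{\ab^{\CC}}(\Z[\hom_{\CC}(a,-)],\mathcal G)\cong\mathcal G(a)$, naturally in $\mathcal G$, and evaluation at $a$ is exact. Post-composing the cosimplicial object $\BB(c)$ with the contravariant Yoneda functor $c'\mapsto\hom_{\CC}(c',-)$ and then with $\Z[-]$ produces a simplicial object in $\ab^{\CC}$; let $P_{\bullet}$ be its alternating sum complex, so $P_n=\Z[\hom_{\CC}(\sqcup^{n+1}c,-)]$ and $\partial_n=\sum_{j=0}^n(-1)^j(d^j)^{*}$, where $(d^j)^{*}$ is precomposition with the coface $d^j\colon\sqcup^{n}c\to\sqcup^{n+1}c$ of $\BB(c)$. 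The map $\hom_{\CC}(c,-)\to\ast$ induces an augmentation $P_0\to\underline{\Z}$.

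The step I expect to be the main obstacle is verifying that $P_{\bullet}\to\underline{\Z}$ is exact after evaluation at every $d\in\CC$. Since $\sqcup^{n+1}c$ is a coproduct, $\hom_{\CC}(\sqcup^{n+1}c,d)\cong S^{n+1}$ with $S\defeq\hom_{\CC}(c,d)$, and under this bijection the map induced by $d^j$ deletes the $j$-th coordinate; so the evaluated complex is precisely the augmented unnormalized chain complex $\cdots\to\Z[S^{3}]\to\Z[S^{2}]\to\Z[S]\to\Z\to0$ of the nerve of the indiscrete groupoid on $S$. Strong connectedness is indispensable exactly here: $S=\hom_{\CC}(c,d)\neq\varnothing$, so fixing $s_0\in S$ and setting $h(\varphi_0,\dots,\varphi_n)=(s_0,\varphi_0,\dots,\varphi_n)$ gives the usual extra-degeneracy chain contraction, making the complex acyclic. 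Granting this, $P_{\bullet}\to\underline{\Z}$ is a projective resolution, and applying $\hom_{\ab^{\CC}}(-,\mathcal F)$, with $\hom_{\ab^{\CC}}(P_n,\mathcal F)\cong\mathcal F(\sqcup^{n+1}c)$ and naturality of Yoneda, turns the differentials into $\sum_{j=0}^n(-1)^j\mathcal F(d^j)$; thus $\hom_{\ab^{\CC}}(P_{\bullet},\mathcal F)$ is literally the alternating sum complex $C\mathcal F\BB(c)$, and
\[
{\sf lim}^n\,\mathcal F=\Ext^n_{\ab^{\CC}}(\underline{\Z},\mathcal F)=H^n\hom_{\ab^{\CC}}(P_{\bullet},\mathcal F)=H^nC\mathcal F\BB(c)=\pi^n\mathcal F\BB(c).
\]

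What I would not spell out are the (co)simplicial identities that make $P_{\bullet}$ a complex and the bookkeeping of signs; the whole weight of the argument rests on the contractibility of the chaotic simplicial set on a non-empty set, the homotopical shadow of strong connectedness. If one prefers to remain inside the injective-resolution setup already fixed, an equivalent route compares the two spectral sequences of the double complex $\mathcal I^q(\sqcup^{p+1}c)$ attached to an injective resolution $\mathcal F\to\mathcal I^{\bullet}$: exactness of evaluation collapses one to $C\mathcal F\BB(c)$, and the other collapses to ${\sf lim}\,\mathcal I^{\bullet}$ once one knows $\pi^p(-)\BB(c)$ vanishes in positive degrees on ${\sf lim}$-acyclic functors --- which, via the coinduced functors $a\mapsto A^{\hom_{\CC}(-,a)}$, reduces again to the same contractibility fact.
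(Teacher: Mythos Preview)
Your argument is correct and genuinely different from the paper's. The paper proceeds indirectly: it first uses Theorem~\ref{hatC_homotopy_constant} to deduce (Corollary~\ref{cohomology_independent_of_object}) that $\pi^n\mathcal F\BB(-)$ is constant, hence ${\sf lim}$-acyclic, so the spectral sequence of Proposition~\ref{lim_spectral_sequence} applied to $C\mathcal F\BB(-)$ converges to $\pi^{p+q}\mathcal F\BB(c)$; then Lemma~\ref{cofaces_induce_iso} forces the $E_1$ differential to alternate between $0$ and an isomorphism, collapsing $E_2$ to the column $E_2^{0,q}={\sf lim}^q\mathcal F$. Your route instead builds an explicit projective resolution $P_\bullet\to\underline{\Z}$ with $P_n=\Z[\hom_\CC(\sqcup^{n+1}c,-)]$, checks exactness objectwise via the classical extra-degeneracy contraction of $\Z[S^{\bullet+1}]\to\Z$ for $S=\hom_\CC(c,d)\neq\varnothing$, and reads off $C\mathcal F\BB(c)$ from Yoneda.

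Your approach is shorter and avoids both the explicit cosimplicial homotopy of Theorem~\ref{hatC_homotopy_constant} and the separate Lemma~\ref{cofaces_induce_iso}; strong connectedness enters at exactly one point, the nonemptiness of $S$. The paper's approach, on the other hand, stays entirely within the injective-resolution framework it has fixed (so no balancing of $\Ext$ is needed) and establishes Corollary~\ref{cohomology_independent_of_object} as an independent statement, which is used elsewhere (e.g.\ to make the $\f$-degree in Definition~\ref{defin_of_degree} well-posed). One small caveat worth making explicit in your write-up: since the paper defines ${\sf lim}^n$ via injective resolutions, identifying it with $H^n\hom_{\ab^\CC}(P_\bullet,\mathcal F)$ does require a balancing step, i.e.\ the double-complex comparison you sketch at the end; this is routine but uses the standing assumption that injectives are available.
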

\begin{proof}
By \eqref{cohomology_independent_of_object} the (co)homotopy groups of $\mathcal F\BB(c)$ are independent of $c$, in particular, a cochain complex $C \mathcal F\BB(-)$ is bounded below, has ${\sf lim}$-acyclic cohomology and there is a spectral sequence \eqref{lim_spectral_sequence}:
$$
E^{p,q}_1 = {\sf lim}^q\, \mathcal F\BB(\sqcup^p c) = {\sf lim}^q\, \mathcal F\BB(c)\ra {\sf lim} \,\pi^{p+q} \mathcal F\BB(c) = \pi^{p+q} \mathcal F\BB(c)
$$
The first page differential in this spectral sequence (which is acting horizontally) is a morphism, induced on ${\sf lim}^q$ by the differential of the alternate sum complex: $\sum_j (-1)^j \mathcal F(d^j)$. Each summand in this differential is an isomorphism by \eqref{cofaces_induce_iso} and hence the first and second page of the spectral sequence look like this:
\begin{gather*}
\small
\xym@R=4pt@C=5pt{
&&\vdots&&\vdots&&&&&&&\vdots&&\vdots\\
&&{\sf lim}^2\,\mathcal F(c)\ar[rr]^-{0}&&{\sf lim}^2\,\mathcal F(c\sqcup c)\ar[rr]^-{\cong}&&&\dots&&&&{\sf lim}^2\,\mathcal F(c)&&0&&0&&\dots\\
&&{\sf lim}^1\,\mathcal F(c)\ar[rr]^-{0}&&{\sf lim}^1\,\mathcal F(c\sqcup c)\ar[rr]^-{\cong}&&&\dots&&&&{\sf lim}^1\,\mathcal F(c)&&0&&0&&\dots\\
q\ar[u]&&{\sf lim}\,\mathcal F(c)\ar[rr]^-{0}&&{\sf lim}\,\mathcal F(c\sqcup c)\ar[rr]^-{\cong}&&&\dots&&q\ar[u]&&{\sf lim}\,\mathcal F(c)&&0&&0&&\dots\\
\ar[rrrr]&&&&&&&&&\ar[rrrr]&&&&&\\
E_1^{p,q}&\ar[uuuu]&p\ar[r]&&&&&&\ar@{-}[uuuuu]&E_2^{p,q}=E_{\infty}^{p,q}&\ar[uuuu]&p\ar[r]&
}
\end{gather*}
Assertion follows.
\end{proof}

\begin{defin}\label{defin_of_degree}
We say that the functor $\mathcal F:\CC\to \mr{Ab}$ has the \textit{degree} $\mr{deg}\,\mathcal F\leq n$ if $Q(\mathcal F\BB(c))^k=0$ for all $k>n$ for some $c\in\CC$.
\end{defin}
This definition of the degree is a generalization (see \cite{pirashvili1982spectral}) of the usual notion of the degree of a polynomial functor between abelian categories \cite{eilenberg1954groups}. We will sketch the (dual version of) main ideas from \cite{pirashvili1982spectral}.

For a category $\CC$ let $\CC_{(1)}$ be a category of splitting monomorphisms of the form $c\to c\sqcup c'$, iteratively $\CC_{(k)}=(\CC_{(k-1)})_{(1)}$. Given a functor $\mathcal F:\CC\to\mr{Ab}$ its \textit{coderivative} is defined as
$$
\mathcal F_{(1)} (c\to c\sqcup c')={\sf coker}\, \{\mathcal F(c)\to \mathcal F(c\sqcup c')\}
$$
Similarly the higher orders coderivatives of $\mathcal F$ are defined. Then the dual version of Proposition 1.7 of \cite{pirashvili1982spectral} holds:

\begin{prop}\label{pirashvili_degree_equals_our_degree}
Let $\mathcal F$ be a functor such that $\mathcal F_{(k)} = 0$ for some $k$. Then $\mr{deg}\, \mathcal F \leq k-1$.
\end{prop}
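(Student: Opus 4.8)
The statement to prove is: if $\mathcal F_{(k)} = 0$ for some $k$, then $\mr{deg}\, \mathcal F \leq k-1$, i.e. $Q(\mathcal F\BB(c))^j = 0$ for all $j \geq k$. The natural approach is to relate the Moore complex $Q(\mathcal F\BB(c))$ directly to the coderivative construction $\mathcal F_{(1)}$, and then iterate. The key observation is that the codegree $j$ term $(QA)^j$ of the Moore complex of a cosimplicial abelian group $A$ is the cokernel of $\bigoplus_{i=1}^{j} A^{j-1} \xrightarrow{d^i} A^j$, and in our case $A = \mathcal F\BB(c)$ has $A^n = \mathcal F(\sqcup^{n+1} c)$, with the cofaces $d^1, \dots, d^j$ being split monomorphisms of the form considered in the definition of $\CC_{(1)}$ (each $d^i$ is induced by a canonical inclusion $\sqcup^{n}c \hookrightarrow \sqcup^{n+1}c$, which is a split mono $\sqcup^n c \to (\sqcup^n c)\sqcup c$). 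So one wants to show that $(QA)^j$, as a functor of $c$, is precisely the $j$-th iterated coderivative of $\mathcal F$ evaluated on an appropriate object of $\CC_{(j)}$ — or at least that it is built out of such coderivatives and vanishes once $\mathcal F_{(k)} = 0$.

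\medskip\noindent
First I would set up the iterative description. By Proposition \ref{iterative_description_of_moore_complex} we have $(QA)^n = \coker\{(QA)^{n-1} \xrightarrow{d^1} (Q\,\mr{Dec}\,A)^{n-1}\}$, so induction on $n$ is the right mechanism. The d\'ecalage $\mr{Dec}(\mathcal F\BB(c))$ has $(\mr{Dec}\,A)^n = \mathcal F(\sqcup^{n+2}c)$ with cofaces shifted up by one; crucially, the "extra" coface $d^0$ of $\mathcal F\BB(c)$, which is dropped in passing to $\mr{Dec}$, together with the identification $\sqcup^{n+2}c = (\sqcup^{n+1}c)\sqcup c$, exhibits $\mr{Dec}(\mathcal F\BB(c))$ as (essentially) the standard complex of the functor $c \mapsto \mathcal F(c \sqcup (-))$, or more precisely the cokernel step $\coker\{\mathcal F(c) \to \mathcal F(c\sqcup c)\} = \mathcal F_{(1)}(c \to c\sqcup c)$ appears exactly as $(Q\,\mr{Dec}\,A)^0$ modulo the image of $(QA)^0 = \mathcal F(c)$ — wait, more carefully: I want to show $(QA)^n(c)$ is naturally a quotient of a value of $\mathcal F_{(1)}$ on $\CC_{(1)}$, by a sub-functor which is itself of this form in lower d\'ecalage. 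So the inductive claim should be phrased: for every $n$, $(Q\mathcal F\BB(c))^n$ is a subquotient — in fact a quotient — of $(Q(\mathcal F_{(1)})\BB_{(1)}(\,\cdot\,))^{n-1}$ for the appropriate object in $\CC_{(1)}$, where $\BB_{(1)}$ is the standard complex built in $\CC_{(1)}$ using coproducts induced from $\CC$. Then iterating $n$ times lands in $\mathcal F_{(n)}$, and $\mathcal F_{(k)} = 0$ forces $(QA)^n = 0$ for $n \geq k$.

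\medskip\noindent
The technical heart is correctly identifying $\coker\{(QA)^{n-1} \xrightarrow{d^1} (Q\,\mr{Dec}\,A)^{n-1}\}$ with the degree-$(n-1)$ Moore term of the coderivative functor. Concretely: $d^1 : \mathcal F(\sqcup^{n+1}c) \to \mathcal F(\sqcup^{n+2}c)$ is split (by $s^0$), the cokernel is $\mathcal F_{(1)}$ evaluated at the split mono $\sqcup^{n+1}c \to \sqcup^{n+1}c \sqcup c$, and the remaining coface structure $d^2,\dots$ on $\mr{Dec}\,A$ descends to this cokernel to give exactly the standard-complex structure for $\mathcal F_{(1)}$ over $\CC_{(1)}$ applied to the object $(c \to c\sqcup c)$ of $\CC_{(1)}$ with its induced coproducts. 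I would verify this compatibility by a direct diagram chase using the coface formulas \eqref{cofaces_and_codegenracies_in_the_standard_complex}, exactly paralleling the argument in \cite{pirashvili1982spectral}, Proposition 1.7, dualized. Once the identification $(Q\mathcal F\BB(c))^n \cong (Q(\mathcal F_{(1)})\BB_{(1)})^{n-1}$ is established (or at least an epimorphism in the right direction, which suffices), the statement follows by an immediate induction: after $k$ steps one reaches $\mathcal F_{(k)} = 0$, so $(Q\mathcal F\BB(c))^n = 0$ for all $n \geq k$, which is exactly $\mr{deg}\,\mathcal F \leq k-1$.

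\medskip\noindent
The step I expect to be the main obstacle is bookkeeping the coproduct structure through the d\'ecalage: the category $\CC_{(1)}$ of split monos $c \to c \sqcup c'$ does not obviously have pairwise coproducts, and one must check that the relevant coproducts (the ones appearing in $\BB_{(1)}$) do exist and are computed "on the nose" by the coproducts in $\CC$ with an extra copy of $c'$ absorbed — equivalently, that the degeneracy/face shuffling in $\mr{Dec}$ matches the canonical inclusions in $\BB_{(1)}$ under the identification $\sqcup^{m}_{\CC}(c\sqcup c') \cong (\sqcup^m_{\CC} c) \sqcup c'$ up to the order of summands. Getting the indices and the signs (in the alternate-sum description, though for $Q$ signs are irrelevant) to line up is the routine-but-delicate part; the conceptual content is entirely in Proposition \ref{iterative_description_of_moore_complex} together with the splitness of $d^1$.
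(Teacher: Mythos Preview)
Your approach is essentially the paper's: both arguments hinge on Proposition~\ref{iterative_description_of_moore_complex} together with the splitness of $d^1$, and both prove by induction that $(Q\mathcal F\BB(c))^n$ is a value of $\mathcal F_{(n)}$. The paper's framing is slightly cleaner and sidesteps exactly the obstacle you flag at the end: rather than trying to recognise the d\'ecalage as a full standard complex $\BB_{(1)}$ in $\CC_{(1)}$ (which forces you to worry about coproducts there), the paper just tracks a single object. It defines $k$-cubes inductively by $c_0(X)=X^0$ and $c_{k+1}(X) = \bigl(c_k(X)\xrightarrow{d^1} c_k(\mr{Dec}\,X)\bigr)$, observes that $c_k(\mathcal F\BB(c))$ is an object of $\CC_{(k)}$, and shows directly that $(QX)^k = \mathcal F_{(k)}(c_k(X))$ by induction via Proposition~\ref{iterative_description_of_moore_complex}. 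No coproduct structure on $\CC_{(1)}$ is needed, only the evaluation of $\mathcal F_{(k)}$ at one object.
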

\begin{proof}
For a cosimplical object $X$ define $k$-cubes $c_k(X)$ iteratively as
$$
c_0(X) = X_0, \ c_{k+1} (X) = c_k(X)\xrightarrow{d^1} c_k (\mr{Dec}\, X)
$$

Then for $X=\mathcal F\BB(c), \  \mathcal F_{(k-1)} (c_{k-1}(X))={\sf coker}\, \{\mathcal F_{(k-1)}(c_{k-1}(X))\xrightarrow{d^1} \mathcal F_{(k-1)}(c_{k-1}(\mr{Dec}\, X))\}$ and from \eqref{iterative_description_of_moore_complex} and the induction we get that
$$
(QX)^k = \mathcal F_{(k)}(c_k(X))
$$.
\end{proof}
The degree functor $\mr{deg}$ behaves in a predictable way with a tensor product of functors :
\begin{thm}\label{degree_of_tensor_product}
Let $\mathcal F$ and $\GG$ be functors of degrees $\leq n$ and $\leq m$ respectively. Then their tensor product $\FF\otimes\GG$ has degree $\leq n+m-1$.
\end{thm}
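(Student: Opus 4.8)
The plan is to prove the estimate by induction on $n+m$, the inductive step resting on a Leibniz‑type decomposition of the first coderivative of a tensor product together with the fact that passing to the coderivative lowers the degree by exactly one. Throughout I would use that $\CC_{(1)}$ is again a strongly connected category with pairwise coproducts (its coproducts are formed coordinatewise, and a morphism between two split monomorphisms exists as soon as one exists between their sources), so that the notions of degree, coderivative and standard complex — and hence the inductive hypothesis — apply equally to functors $\CC_{(1)}\to\ab$; and that $Q$ is exact, so the degree of a finite direct sum of functors is the maximum of the degrees.

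Two ingredients are needed. First, a Leibniz rule: writing $V\colon\CC_{(1)}\to\CC$ for the source functor $(c\to c\sqcup c')\mapsto c$, the splitting carried by an object $j\colon c\to c\sqcup c'$ of $\CC_{(1)}$ produces natural decompositions $\mathcal F(c\sqcup c')\cong\mathcal F(c)\oplus\mathcal F_{(1)}(j)$ and likewise for $\mathcal G$; tensoring and discarding the summand $\mathcal F(c)\otimes\mathcal G(c)=(\mathcal F\otimes\mathcal G)(c)$, which is exactly the image of $(\mathcal F\otimes\mathcal G)(j)$, yields a natural isomorphism on $\CC_{(1)}$
$$(\mathcal F\otimes\mathcal G)_{(1)}\ \cong\ \bigl((\mathcal F\circ V)\otimes\mathcal G_{(1)}\bigr)\oplus\bigl(\mathcal F_{(1)}\otimes(\mathcal G\circ V)\bigr)\oplus\bigl(\mathcal F_{(1)}\otimes\mathcal G_{(1)}\bigr).$$
Second, two monotonicity statements: $\mr{deg}\,(\mathcal F\circ V)\leq\mr{deg}\,\mathcal F$ and, crucially, $\mr{deg}\,\mathcal F_{(1)}\leq\mr{deg}\,\mathcal F-1$. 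The first holds because the higher coderivatives of $\mathcal F\circ V$ are restrictions of those of $\mathcal F$ along the forgetful functors $\CC_{(k+1)}\to\CC_{(k)}$, so the cubes computing its degree are pulled back from those of $\mathcal F$; the second I would obtain by comparing the standard complex of $\CC$ with that of $\CC_{(1)}$, using Proposition \ref{iterative_description_of_moore_complex} and the cube description $(QX)^k=\mathcal F_{(k)}(c_k(\BB(c)))$ from the proof of Proposition \ref{pirashvili_degree_equals_our_degree} to identify the part of $Q(\mathcal F\BB(c))$ in degrees $\geq 1$ with a one‑step shift of $Q$ applied to $\mathcal F_{(1)}$ evaluated on the standard complex of $\CC_{(1)}$ at a suitable object.

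With these in hand the induction closes: by the monotonicity of degree under $(-)_{(1)}$ it suffices to show $\mr{deg}\,(\mathcal F\otimes\mathcal G)_{(1)}\leq n+m-2$, and since $\mr{deg}\,(\mathcal F\circ V)\leq n$, $\mr{deg}\,(\mathcal G\circ V)\leq m$, $\mr{deg}\,\mathcal F_{(1)}\leq n-1$, $\mr{deg}\,\mathcal G_{(1)}\leq m-1$, the inductive hypothesis applied over $\CC_{(1)}$ bounds the three summands of the Leibniz decomposition by $n+(m-1)-1$, $(n-1)+m-1$ and $(n-1)+(m-1)-1$, all $\leq n+m-2$; taking the maximum and going back up one coderivative gives $\mr{deg}\,(\mathcal F\otimes\mathcal G)\leq n+m-1$. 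The base cases — where $n$ or $m$ is at its smallest value, so that one of $\mathcal F_{(1)},\mathcal G_{(1)}$ vanishes and the Leibniz sum degenerates — are handled directly.

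The main obstacle is the lemma $\mr{deg}\,\mathcal F_{(1)}=\mr{deg}\,\mathcal F-1$: it is precisely this strict drop that pushes all three Leibniz summands down to degree $n+m-2$ rather than $n+m-1$, and so is responsible for the ``$-1$'' in the statement. Establishing it cleanly comes down to a careful comparison of the standard complex, its d\'ecalage and the cube filtration — a mild elaboration of Propositions \ref{iterative_description_of_moore_complex} and \ref{pirashvili_degree_equals_our_degree} — after which everything else is formal bookkeeping.
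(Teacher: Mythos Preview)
Your approach and the paper's rest on the same key identity---the Leibniz rule for the coderivative of a tensor product---combined with Proposition \ref{pirashvili_degree_equals_our_degree}. The paper writes the rule in the asymmetric two-term form
\[(\FF\otimes\GG)_{(1)}(f)\cong\FF(c)\otimes\GG_{(1)}(f)\ \oplus\ \FF_{(1)}(f)\otimes\GG(c'),\]
which is equivalent to your symmetric three-term version (your cross term $\FF_{(1)}\otimes\GG_{(1)}$ is absorbed into the second summand via $\GG(c')\cong\GG(c)\oplus\GG_{(1)}(f)$). Rather than inducting on $n+m$, the paper simply iterates this identity to a closed formula
\[(\FF\otimes\GG)_{(k)}\cong\bigoplus_{i+j=k}s^i\FF_{(j)}\otimes t^j\GG_{(i)}\]
and invokes Proposition \ref{pirashvili_degree_equals_our_degree} once. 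This sidesteps both of your auxiliary lemmas: there is no need to compare the standard complexes over $\CC$ and $\CC_{(1)}$, nor to isolate the drop $\mr{deg}\,\FF_{(1)}\leq\mr{deg}\,\FF-1$ as a separate statement. Your inductive packaging is sound and arguably more transparent about where each unit of degree is consumed, but the direct iteration is shorter.
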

\begin{proof}
For a given split monomorphism $f:c\to c'$ in $\CC$ the map $(\FF\otimes\GG)(f)$ divides into composition of two split monomorphisms
$$
(\FF(c)\otimes\GG(c))\xrightarrow{\mr{id}\otimes \GG(f)}(\FF(c)\otimes\GG(c'))\xrightarrow{\FF(f)\otimes\mr{id}} (\FF(c')\otimes\GG(c'))
$$
hence coderivative of the tensor product splits as
$$
(\FF\otimes\GG)_{(1)}(f)=\FF(c)\otimes\GG_{(1)}(f)\oplus\FF_{(1)}(f)\otimes\GG(c')
$$
By iterating this formula we get
$$
(\FF\otimes\GG)_{(k)}=\bigoplus_{i+j=n}s^i\FF_{(j)}\otimes t^j\GG_{(i)}
$$
Result now follows from this formula and Proposition \ref{pirashvili_degree_equals_our_degree}.
\end{proof}
\section{K\"unneth theorem}
We can use the fact that ${\sf lim}^n\,\FF$ can be expressed as cohomology groups of a well-understood complex to determine the higher limits of a tensor product of functors, using a K\"unneth-type spectral sequence as in (6.8) of \cite{quillen}. For later use in \eqref{fr-codes_section}  we will expand our universe of functors and describe the K\"unneth formula in this generalized setting.

As in \cite{ivanov2015higher}, let $\mr{Mod}_{\sf r}$  denote the category of pairs $(R,M)$, where $R$ is a ring and $M$ is a right $R$-module. Morphisms are pairs $(f,\varphi): (R,M)\to (S,N)$ consisting of ring homomorphism $f:R\to S$ and $R$-linear map $\varphi: M\to N$, where $R$ acting on $N$ through $f$. There is a natural projection $\mr{Mod}_{\sf r}\to\mr{Ring}$. Similarly, $\mr{Mod}_{\sf l}$ will denote the category of left modules over arbitrary rings.
\begin{defin}[\cite{ivanov2015higher}, (3.2)]\label{defin_of_O-module}
Let $\OO:\CC\to \mr{Ring}$ be a $\mr{Ring}$-valued functor. Then the \textit{right $\OO$-module} $\FF:\CC\to\mr{Mod}$ is a functor, such that the following diagram commutes:
$$
\xym{
	&\mr{Mod}_{\sf r}\ar[d]\\
	\CC\ar[ur]^{\FF}\ar[r]^{\OO}&\mr{Ring}
}
$$
Definition of the \textit{left $\OO$-module} is completely symmetric.
\end{defin}
Note that the higher limits ${\sf lim}^{\bullet}\,\FF$ have a structure of a graded module over graded ring ${\sf lim}^{\bullet}\,\OO$. For the right $\OO$-module $\FF$ and the left $\OO$-module $\GG$ their \textit{tensor product over $\OO$} is defined as a functor:
$$
\FF\otimes_{\OO}\GG:\CC\to \mr{Ab}, \ \FF\otimes_{\OO}\GG(c) = \FF(c)\otimes_{\OO(c)}\GG(c)
$$

\begin{thm}\label{Kunneth_thm}
Let $\OO:\CC\to \mr{Ring}$ be a functor such that ${\sf lim}^{\bullet}\OO$ is of finite global dimension. For a right $\OO$-module $M$ and left $\OO$-module $N$ of finite degree, such that $N(c)$ is a flat $\OO (c)$-module for all $c\in\CC$ there is a second quadrant spectral sequence
$$
E^2_{p,q} = {\sf Tor}_p^{{\sf lim}^{\bullet}\,\OO}({\sf lim}^{\bullet}\,M,{\sf lim}^{\bullet}\,N)_q\ra {\sf lim}^{\bullet}\,M\otimes_{\OO} N
$$
\end{thm}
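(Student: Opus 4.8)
The plan is to transport the whole situation to the cosimplicial world by means of Theorem~\ref{higher_limits_as_cohomology} and then to run a Künneth spectral sequence for a cosimplicial ring and cosimplicial modules, in the spirit of (6.8) of \cite{quillen} (dualised). Fix an object $c\in\CC$ witnessing $\mr{deg}\,N<\infty$, i.e.\ with $Q(N\BB(c))^{k}=0$ for $k>\mr{deg}\,N$. Put $\mathsf R=\OO\BB(c)$; since $\OO$ is $\mr{Ring}$-valued this is a cosimplicial ring, and $\mathsf M=M\BB(c)$, $\mathsf N=N\BB(c)$ are a cosimplicial right and a cosimplicial left $\mathsf R$-module respectively. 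Because $\FF\otimes_{\OO}\GG$ is computed objectwise and $\BB(c)^{n}=\sqcup^{n+1}c$ lies in $\CC$, we have $(M\otimes_{\OO}N)\BB(c)=\mathsf M\otimes_{\mathsf R}\mathsf N$, the levelwise tensor product of cosimplicial abelian groups. By Theorem~\ref{higher_limits_as_cohomology},
$$ {\sf lim}^{\bullet}\OO=\pi^{\bullet}\mathsf R,\quad {\sf lim}^{\bullet}M=\pi^{\bullet}\mathsf M,\quad {\sf lim}^{\bullet}N=\pi^{\bullet}\mathsf N,\quad {\sf lim}^{\bullet}(M\otimes_{\OO}N)=\pi^{\bullet}(\mathsf M\otimes_{\mathsf R}\mathsf N), $$
so the theorem becomes a statement about these explicit (co)chain complexes; in particular injective objects in $\ab^{\CC}$ play no role.

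Next I would construct the spectral sequence at the cosimplicial level. The hypothesis that $N(c')$ is flat over $\OO(c')$ for every $c'\in\CC$ means that $\mathsf N$ is levelwise flat over $\mathsf R$, so $\mathsf M\otimes_{\mathsf R}\mathsf N$ already represents the derived tensor product $\mathsf M\otimes^{\mathbb L}_{\mathsf R}\mathsf N$. Choosing a bicosimplicial $\mathsf R$-module $P_{\bullet,\bullet}\to\mathsf M$ resolving $\mathsf M$ with each row $P_{i,\bullet}$ a free cosimplicial $\mathsf R$-module (cf.\ \cite{quillen}), applying $(-)\otimes_{\mathsf R}\mathsf N$ and passing to the total complex yields a double complex concentrated in non-negative degrees whose total cohomology is $\pi^{\bullet}(\mathsf M\otimes_{\mathsf R}\mathsf N)$; filtering by the resolution degree and using freeness of the $P_{i,\bullet}$ to recognise the induced rows gives the Künneth spectral sequence
\begin{equation*}
E^{2}_{p,q}={\sf Tor}_{p}^{\pi^{\bullet}\mathsf R}(\pi^{\bullet}\mathsf M,\pi^{\bullet}\mathsf N)_{q}\ \Longrightarrow\ \pi^{\bullet}(\mathsf M\otimes_{\mathsf R}\mathsf N).
\end{equation*}
Substituting the identifications of the previous paragraph turns this into the asserted spectral sequence $E^{2}_{p,q}={\sf Tor}_{p}^{{\sf lim}^{\bullet}\OO}({\sf lim}^{\bullet}M,{\sf lim}^{\bullet}N)_{q}\Rightarrow{\sf lim}^{\bullet}(M\otimes_{\OO}N)$. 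Equivalently one may first normalise, turning $C\OO\BB(c)$ into a DG ring and $CM\BB(c)$, $CN\BB(c)$ into DG modules over it via the Alexander--Whitney maps, with unchanged (co)homology, and then quote the Eilenberg--Moore/Künneth spectral sequence for DG rings; the two routes agree by the Eilenberg--Zilber theorem.

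It remains to settle convergence and the shape of the spectral sequence. Finiteness of the global dimension of ${\sf lim}^{\bullet}\OO$ makes ${\sf Tor}_{p}$ vanish for $p$ exceeding that dimension, so there are only finitely many non-zero columns. Finiteness of $\mr{deg}\,N$ makes $C(N\BB(c))$ chain homotopy equivalent to the bounded complex $Q(N\BB(c))$, so ${\sf lim}^{\bullet}N$ is concentrated in degrees $\le\mr{deg}\,N$; together with the column bound this leaves each ${\sf lim}^{n}(M\otimes_{\OO}N)$ with only finitely many contributions, so the spectral sequence converges, and the usual conventions for the $\mr{Tor}$-degree against the cohomological ${\sf lim}$-degree put it in the second quadrant. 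That $M\otimes_{\OO}N$ is itself of finite degree, which one also reads off from Theorem~\ref{degree_of_tensor_product}, is consistent with the location of the abutment.

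The genuine obstacle is the middle step: producing the Künneth spectral sequence for \emph{unbounded} cosimplicial objects. One has to verify simultaneously that (i) levelwise flatness of $\mathsf N$ really forces $\mathsf M\otimes_{\mathsf R}\mathsf N$ to compute the derived tensor product of the cosimplicial pair, (ii) the Eilenberg--Zilber comparison between $C(\mathsf M\otimes_{\mathsf R}\mathsf N)$ and $C\mathsf M\otimes_{C\mathsf R}C\mathsf N$ is compatible with the ring and module structures, and (iii) the spectral sequence converges in spite of $\BB(c)$ being infinite in the cosimplicial direction --- which is exactly where the hypotheses ``${\sf lim}^{\bullet}\OO$ of finite global dimension'' and ``$N$ of finite degree'' are used. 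Everything else is bookkeeping with the isomorphisms from Theorem~\ref{higher_limits_as_cohomology}.
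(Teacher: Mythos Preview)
Your proposal is correct and follows essentially the same route as the paper: transport everything to the cosimplicial setting via Theorem~\ref{higher_limits_as_cohomology}, invoke the cosimplicial version of Quillen's K\"unneth argument (Theorem~6 of \cite{quillen}) by resolving $M\BB(c)$ over $\OO\BB(c)$ by a resolution $P_\bullet$ whose cohomotopy is free over $\pi^{\bullet}\OO\BB(c)$, form the double complex $P_\bullet\otimes_{\OO\BB(c)}N\BB(c)$, and compare the two spectral sequences. The paper's proof is terser and leaves the convergence bookkeeping implicit, but your identification of where the hypotheses (flatness of $N(c)$, finite degree of $N$, finite global dimension of ${\sf lim}^{\bullet}\OO$) enter matches exactly how they are used.
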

\begin{proof}
The proof is a direct combination of the cosimplicial version of Theorem 6 of \cite{quillen}  and \eqref{higher_limits_as_cohomology}. Fix $c\in\CC$ and consider the projective resolution $P_{\bullet}$ of $M\BB(c)$ over $\OO\BB(c)$ such that $\pi^{\bullet}P_i$ are free $\pi^{\bullet}\OO\BB(c)$ modules for all $i$. The resolution $P_{\bullet}$ can be constructed in a way that $\pi^{\bullet}P_{\bullet}$ is a free resolution of ${\sf lim}^{\bullet}\,M$ over ${\sf lim}^{\bullet}\,\OO$ and hence there is an isomorphism of graded abelian groups
$$
\pi^{\bullet}P_i\otimes_{{\sf lim}^{\bullet}\,\OO}{\sf lim}^{\bullet}\,N\cong \pi^{\bullet}(P_i\otimes_{\OO\BB(c)}N\BB(c))
$$
\end{proof}
Applying the Moore chain complex functor $Q$ horizontally to the cosimplicial chain complex $D =P_{\bullet}\otimes_{\OO\BB(c)} N\BB(c)$ and switching to the homological notation, we obtain a second quadrant double complex. Further argument is standard. Consider two spectral sequences, associated with $D$:
\begin{itemize}

\item $E^2_{p,q}=H_p^h H_q^v D = H_p Q({\sf Tor}^{\OO\BB(c)}(M\BB(c),N\BB(c)))$
Provided $N\BB(c)$ is free as $\OO\BB(c)$-module, only the bottom line is nontrivial on the second page and the spectral sequence converges to ${\sf lim}^{\bullet}\,M\otimes_{\OO} N$
\item $E^2_{p,q}=H_q^v H_p^h D = H_q (\pi^{\bullet}P_p\otimes_{{\sf lim}^{\bullet}\,\OO}{\sf lim}^{\bullet}\,N) = ({\sf Tor}_p^{{\sf lim}^{\bullet}\,\OO}({\sf lim}^{\bullet}\,M,{\sf lim}^{\bullet}\,N))_q$

Since ${\sf Tor}$-functors vanish above the certain line this spectral sequence converges to the same limit, as the first one.
\end{itemize}

\section{$\f\r$-codes}\label{fr-codes_section}

We denote by $\mr{Pres}$ the category whose objects are all presentations $c: F \twoheadrightarrow G$ and morphisms are commutative squares
\begin{equation}\label{eq_mor_pres}
\xym{
F\arrow[r]^{\tilde  \varphi }\arrow@{->>}[d]^{c} & F' \arrow@{->>}[d]^{c'} \\
G\arrow[r]^{\varphi} & G'
}
\hspace{3cm}
(\varphi,\tilde \varphi):c\to c'.
\end{equation}
For each group $G$ the category $\mr{Pres}(G)$ is a subcategory of $\mr{Pres}.$ Then for a functor $$\mathcal F:\mr{Pres}\to \mr{Ab}$$ and any $i\geq 0$ we have a map  
$$G\mapsto \underset{\mr{Pres}(G)}{{\sf lim}^i} \  \mathcal F  .$$ 
Here $c: F \twoheadrightarrow G$ 
can be considered as an object of $\mr{Pres}(G)$ and 
we can take $\BB(c)$ that 
we will denote by 
$\BB_G(c)$ 
in order to emphasize that we take it in the category $\mr{Pres}(G)$ 
but not in the whole category $\mr{Pres}.$
By Theorem \ref{higher_limits_as_cohomology} we have an isomorphism $$ \underset{\mr{Pres}(G)}{{\sf lim}^i} \  \mathcal F \cong H^i \mathcal F \BB_G (c).$$ 
Moreover any morphism \eqref{eq_mor_pres} in the category $\mr{Pres}$ gives a morphism of cosimplicial objects 
$$\BB_{\varphi,\tilde \varphi} :\BB_G(c) \to \BB_{G'}(c').$$ Then the morphism $(\varphi, \tilde  \varphi)$ induces a homomorphism  
\begin{equation}\label{eq_map_lim}
\underset{\mr{Pres}(G)}{{\sf lim}^i} \  \mathcal F \longrightarrow \underset{\mr{Pres}(G')}{{\sf lim}^i} \  \mathcal F.
\end{equation}

\begin{lem}\label{lemma_functoriality}
The homomorphism \eqref{eq_map_lim} depends only on $\varphi$ and does not depend on the choice of presentations and $\tilde \varphi.$ Moreover, these homomorphisms define a functor 
$$\underset{\mr{Pres}(G)}{{\sf lim}^i}\FF : \mr{Gr} \longrightarrow \mr{Ab}.$$
\end{lem}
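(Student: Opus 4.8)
The plan is to notice that the morphisms $\BB_{\varphi,\tilde\varphi}$ are functorial in the pair $(\varphi,\tilde\varphi)$ — in each cosimplicial degree the map $\BB_{\varphi,\tilde\varphi}$ is a free product of copies of $\tilde\varphi$, and these clearly respect composition — so the cosimplicial objects $\BB_G(c)$ (for $c$ a presentation of $G$) assemble into a single functor $\BB\colon\mr{Pres}\to\mr{Pres}^{\Delta}$. Composing with $\mathcal F$, with the alternate-sum complex $C$ and with $H^i$ produces a functor
$$\Pi^i\defeq\pi^i\,\mathcal F\BB(-)\colon\mr{Pres}\longrightarrow\mr{Ab},$$
and for $c$ over $G$ we have $\Pi^i(c)=H^i\mathcal F\BB_G(c)\cong\underset{\mr{Pres}(G)}{{\sf lim}^i}\mathcal F$ by Theorem \ref{higher_limits_as_cohomology}, the homomorphism \eqref{eq_map_lim} attached to a morphism $(\varphi,\tilde\varphi)$ being precisely $\Pi^i(\varphi,\tilde\varphi)$ under these identifications. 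So everything reduces to showing that, once the identifications are fixed, $\Pi^i(\varphi,\tilde\varphi)$ depends only on $\varphi$ and is functorial in $\varphi$.

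The first step (Step 1) is to show that any morphism lying in a single fibre $\mr{Pres}(G)$ acts as the identity. If $h\colon c_1\to c_2$ is a morphism of $\mr{Pres}(G)$, then by Theorem \ref{hatC_homotopy_constant} any two morphisms $c_1\to c_2$ induce cosimplicially homotopic maps $\BB_G(c_1)\to\BB_G(c_2)$, so $\Pi^i(h)$ is independent of the choice of $h$, and by Corollary \ref{cohomology_independent_of_object} it is an isomorphism. Fixing a base presentation $c_G$ of $G$, strong connectedness of $\mr{Pres}(G)$ together with this independence make the maps $\Pi^i(c_G)\to\Pi^i(c)$ induced by any morphism $c_G\to c$ into a natural isomorphism from the constant functor with value $\Pi^i(c_G)$ onto the restriction of $\Pi^i$ to $\mr{Pres}(G)$; combined with the isomorphism of Theorem \ref{higher_limits_as_cohomology} at $c_G$, this pins down the identification $\underset{\mr{Pres}(G)}{{\sf lim}^i}\mathcal F\cong\Pi^i(c)$ for all $c$ over $G$, and with respect to it $\Pi^i(h)$ is the identity for every morphism $h$ of $\mr{Pres}(G)$.

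Next I would prove independence of the lift $\tilde\varphi$ (Step 2). Fixing $c\colon F\twoheadrightarrow G$ and $c'\colon F'\twoheadrightarrow G'$, consider the presentation $d\colon F * F'\twoheadrightarrow G'$ of $G'$ whose structure map is $\varphi\circ c$ on the factor $F$ and $c'$ on the factor $F'$. Inclusion of the second free factor is a morphism $\iota\colon c'\to d$ of $\mr{Pres}(G')$, whereas inclusion of the first free factor, together with $\varphi$, is a morphism $\mu\colon c\to d$ of $\mr{Pres}$ not involving any lift. For every lift $\tilde\varphi$ of $\varphi$, the homomorphism $F * F'\to F'$ equal to $\tilde\varphi$ on $F$ and to $\id_{F'}$ on $F'$ is a morphism $\nu_{\tilde\varphi}\colon d\to c'$ of $\mr{Pres}(G')$ satisfying $\nu_{\tilde\varphi}\iota=\id_{c'}$ and $\nu_{\tilde\varphi}\mu=(\varphi,\tilde\varphi)$. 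Applying $\Pi^i$ and using Step 1 (so $\Pi^i(\iota)$ is invertible and $\Pi^i(\nu_{\tilde\varphi})\Pi^i(\iota)=\id$, whence $\Pi^i(\nu_{\tilde\varphi})=\Pi^i(\iota)^{-1}$), we obtain $\Pi^i(\varphi,\tilde\varphi)=\Pi^i(\iota)^{-1}\Pi^i(\mu)$, which is manifestly independent of $\tilde\varphi$. Independence of the choices of $c$ and $c'$ follows by bridging: given presentations $c_1,c_2$ of $G$ and $c'_1,c'_2$ of $G'$, choose morphisms $g\colon c_2\to c_1$ in $\mr{Pres}(G)$ and $h'\colon c'_1\to c'_2$ in $\mr{Pres}(G')$ and a morphism $(\varphi,\tilde\varphi)\colon c_1\to c'_1$ over $\varphi$; then $h'(\varphi,\tilde\varphi)g\colon c_2\to c'_2$ is over $\varphi$, and $\Pi^i(h'(\varphi,\tilde\varphi)g)=\Pi^i(h')\Pi^i(\varphi,\tilde\varphi)\Pi^i(g)$ equals $\Pi^i(\varphi,\tilde\varphi)$ after the identifications of Step 1 (since $\Pi^i(g)$ and $\Pi^i(h')$ become identities), so the homomorphism \eqref{eq_map_lim} computed with $(c_1,c'_1)$ and with $(c_2,c'_2)$ coincide. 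This makes $\varphi_{*}\defeq\Pi^i(\varphi,\tilde\varphi)$ well defined.

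Functoriality is then immediate: choosing $c'=c$ and $\tilde\varphi=\id_F$ gives $(\id_G)_{*}=\id$, and since a lift of $\psi\colon G'\to G''$ composed with a lift of $\varphi$ is a lift of $\psi\varphi$, the relation $\Pi^i(\psi\varphi,\tilde\psi\tilde\varphi)=\Pi^i(\psi,\tilde\psi)\Pi^i(\varphi,\tilde\varphi)$ yields $(\psi\varphi)_{*}=\psi_{*}\varphi_{*}$, so $G\mapsto\underset{\mr{Pres}(G)}{{\sf lim}^i}\mathcal F$ is a functor $\mr{Gr}\to\mr{Ab}$. The main obstacle I anticipate is the uniformity in Step 1 — checking that the isomorphism of Theorem \ref{higher_limits_as_cohomology} can be chosen coherently across a whole fibre $\mr{Pres}(G)$, so that morphisms of $\mr{Pres}(G)$ really do act as the identity on ${\sf lim}^i$ — together with spotting the factorisation of $(\varphi,\tilde\varphi)$ through $d$ in Step 2; the remaining verifications are routine diagram chases using the functoriality of $\BB$ on $\mr{Pres}$.
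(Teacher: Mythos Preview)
Your proposal is correct and complete. The core engine is the same as the paper's --- Theorem~\ref{hatC_homotopy_constant} to make all morphisms within a fibre $\mr{Pres}(G)$ act identically on $\pi^i\mathcal F\BB$, plus a coproduct construction to bridge different choices --- but the way you organize it is different and in some respects cleaner.

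The paper's argument is a single commutative diagram: given $(\varphi,\tilde\varphi_1):c_1\to c_1'$ and $(\varphi,\tilde\varphi_2):c_2\to c_2'$, it forms $c_1*c_2\twoheadrightarrow G$ and $c_1'*c_2'\twoheadrightarrow G'$ and the morphism $(\varphi,\tilde\varphi_1*\tilde\varphi_2)$ between them; both $(\varphi,\tilde\varphi_i)$ then sit in a diagram whose vertical arrows are the canonical inclusions (morphisms in a single fibre), and Theorem~\ref{hatC_homotopy_constant} makes those verticals isomorphisms that identify the top and bottom horizontal maps with the middle one. This handles change of presentation and change of lift simultaneously, in one stroke, but it leaves the identification of sources and targets somewhat implicit.

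Your route separates the issues: first you pin down once and for all that $\Pi^i$ is constant on each fibre (Step~1), and then you factor $(\varphi,\tilde\varphi)=\nu_{\tilde\varphi}\circ\mu$ through the ``mixed'' presentation $d\colon F*F'\twoheadrightarrow G'$, where $\mu$ is lift-free and $\nu_{\tilde\varphi}$ lives entirely in $\mr{Pres}(G')$. This makes the independence of $\tilde\varphi$ completely transparent and gives functoriality in $\varphi$ for free from functoriality of $\BB$ on $\mr{Pres}$. The worry you flag about coherence in Step~1 is not a real obstacle: you only invoke the isomorphism of Theorem~\ref{higher_limits_as_cohomology} at the single base object $c_G$ and transport it along $\Pi^i(c_G\to c)$, so no compatibility condition needs to be checked beyond what Theorem~\ref{hatC_homotopy_constant} already provides.
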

\begin{proof}
Assume that we have two presentations $c_i: F_i \twoheadrightarrow G,$ $i=1,2$  for $G,$ two presentations  $c_i':F_i'\twoheadrightarrow G'$ for $G'.$ Assume also that we have two morphisms $(\varphi, \tilde \varphi_i ):c_i \to c_i'$ in $\mr{Pres}.$ Consider the presentations $ c_1*c_2 : F_1*F_2\twoheadrightarrow G $ and $ c_1'*c_2' : F_1'*F_2'\twoheadrightarrow G',$ the morphism $(\varphi,\tilde \varphi_1*  \tilde \varphi_2):c_1*c_2 \to c_1'*c_2'$ and the commutative diagram
$$
\xym{
\BB_G(c_1)\arrow[rr]^{\BB_{\varphi,\tilde \varphi_1}}\arrow[d]  && \BB_{G'}(c_1')\arrow[d]  \\
\BB_G(c_1* c_2)\arrow[rr]^{\BB_{\varphi,\tilde \varphi_1* \tilde  \varphi_2}} && \BB_{G'}(c_1'* c_2')\\ 
\BB_G(c_2)\arrow[rr]^{\BB_{\varphi,\tilde \varphi_2}} \arrow[u] && \BB_{G'}(c_2') \arrow[u] 
 }.
$$ 
By Theorem \ref{hatC_homotopy_constant} the vertical arrows induce isomorphisms on $H^i \mathcal F \BB(-).$ The assertion follows. 
\end{proof}

The group ring functor $\Z[-]:\mr{Pres}\to\mr{Ring}, \ (F\twoheadrightarrow G)\mapsto \Z[F]$ has two functorial ideals ($\Z[F]$-modules in sense of Definition \ref{defin_of_O-module}) $\f$ and $\r$ defined as
$$
\f (F\twoheadrightarrow G)= {\sf ker}\{\Z[F]\to\Z\}, \ \r (F\twoheadrightarrow G)= {\sf ker}\{\Z[F]\to\Z[G]\}
$$
  
\begin{defin}
The $\Z[F]$-module ${\bf c}:\mr{Pres}\to\mr{Ab}$ is called an \textit{$\f\r$-code}, if it is a functorial ideal of $\Z[F]$, formed by products of the ideals $\f$ and $\r$, their sums and intersections. 

Usually we consider ${\bf c}$ as a functor from $\mr{Pres}(G)\to \mr{Ab}$ for a fixed $G,$ limits always are taken over $\mr{Pres}(G).$ We need it to be defined on the category $\mr{Pres}$ only for the functors
$$ {\sf lim}^i\ {\bf c} : \mr{Gr} \longrightarrow \mr{Ab}$$
to be well-defined. (see Lemma \ref{lemma_functoriality}).
\end{defin}

The notion of degree \eqref{defin_of_degree} seems to be a reasonable invariant of $\f\r$-code for the estimation of its ${\sf lim}^{\bullet}$-dimension, since the Moore chain complex functor $Q$ is exact and the property of a functor being a degree $\leq k$ is closed under extensions. But already $\f$ itself has an infinite degree, although it is \textit{$\Z[F]$-additive} (i.e. of degree one with respect to $\Z[F]$), as shown in \cite{ivanov2017fr}. But since all $\f\r$-codes are subfunctors of $\f$, this difficulty can be overcame by introducing the following notion:
\begin{defin}\label{defin_of_f-degree}
An \textit{$\f$-degree} of an $\f\r$-code $\c$ is a degree of the quotient $\f/\c$.
\end{defin}
Since $\f$ has trivial limits, it is straightforward that if $\mr{deg}^{\f}\,\c\leq n$ then ${\sf lim}^{i}\,\c=0$ for $i>n+1$.
\begin{thm}\label{fr-codes_are_finite}
Every (finite) $\f\r$-code $\c$ has a finite $\f$-degree and hence only a finite number of the non-zero higher limits.
\end{thm}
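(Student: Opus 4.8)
The plan is to show that the $\f$-degree of an $\f\r$-code $\c$ is finite by an induction on the length of the code (the maximal length of a monomial appearing in $\c$), reducing everything to the behaviour of the coderivatives $(\f/\c)_{(k)}$ and invoking Proposition \ref{pirashvili_degree_equals_our_degree}. The base case is the quotient $\f/\f\r$ or $\f/\r$, for which one checks directly that the second coderivative vanishes; this uses the fact that on a split monomorphism $F\to F\ast F'$ in $\mr{Pres}$ the ideals $\f$, $\r$ behave well with respect to the free product decomposition $\Z[F\ast F']$, so that the cokernels stabilize after at most two steps.

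The heart of the argument is a structural analysis of how the coderivative $(-)_{(1)}$ interacts with products, sums and intersections of the ideals $\f,\r$. First I would record that for a split monomorphism $c=(F\epii G)\to c'=(F\ast F' \epii G)$ the group ring $\Z[F\ast F']$ decomposes as a free $\Z[F]$-module, with a preferred complement to $\Z[F]$ spanned by reduced words that end in a nontrivial syllable from $F'$; this gives an explicit functorial splitting $\Z[F\ast F'] = \Z[F]\oplus U$ compatible with the ideal filtrations. Using this splitting I would compute $\f_{(1)}$, $\r_{(1)}$ and, more generally, $(\mathfrak{m}\cdot \mathfrak{n})_{(1)}$ for monomials $\mathfrak{m},\mathfrak{n}$ in $\f,\r$ in terms of tensor products of lower pieces, in the spirit of the splitting in the proof of Theorem \ref{degree_of_tensor_product}: the coderivative of a product picks up one fewer factor, so that the ``f-degree'' drops by at least one with each application of $(-)_{(1)}$, after passing to the quotient by $\f$. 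For sums and intersections I would use that $(-)_{(1)}$, being a cokernel of a split mono, is exact on the relevant short exact sequences of subfunctors of $\f$, together with the standard fact that the class of functors of $\f$-degree $\leq n$ is closed under subobjects, quotients and extensions (exactness of $Q$ again). Iterating, after finitely many coderivatives every monomial summand of $\f/\c$ is killed, so $(\f/\c)_{(k)}=0$ for $k$ bounded by (a function of) the total length of the code; Proposition \ref{pirashvili_degree_equals_our_degree} then gives $\mr{deg}^{\f}\,\c \leq k-1 < \infty$, and the remark following Definition \ref{defin_of_f-degree} gives the finiteness of the non-zero higher limits.

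The main obstacle I expect is the bookkeeping for \emph{intersections} of ideals, such as $\r^2\cap\f^3$: unlike products and sums, an intersection does not interact with the free-product splitting of $\Z[F\ast F']$ in an obviously term-by-term fashion, and the naive bound on its coderivative need not be the minimum of the bounds for the two ideals. The fix is to embed the intersection into the product $\mathfrak{m}\times\mathfrak{n}$ via the diagonal (so that $\c_1\cap\c_2$ is a subfunctor of $\f$ sitting in an exact sequence with $\c_1\oplus\c_2$ and $\c_1+\c_2$) and to bound its $\f$-degree by $\max$ of those of $\c_1,\c_2$ using closure under subobjects and extensions; the point is merely that \emph{some} finite bound exists, not the sharp one. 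A secondary technical point is to make the ``preferred complement'' $U$ above genuinely functorial in morphisms of $\mr{Pres}$ (not just in the split monos), but this is handled exactly as in \cite{ivanov2017fr}, where the $\Z[F]$-additivity of $\f$ is established by the same kind of normal-form argument for free products.
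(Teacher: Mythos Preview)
Your route is genuinely different from the paper's, and the comparison is worth making explicit.

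The paper does \emph{not} induct on the structure of the code. Its key move is to observe that every $\f\r$-code $\c$ contains some power $\r^n$ (since $\r\subset\f$, each length-$\ell$ monomial contains $\r^\ell$, and this survives sums and intersections). The surjection $\f/\r^n\twoheadrightarrow\f/\c$ gives a surjection of Moore complexes, so the entire problem collapses to bounding $\mr{deg}\,(\f/\r^n)$. That is handled by the filtration with graded pieces $\r^k/\r^{k+1}\cong(\r/\r^2)^{\otimes_{\Z[F]}k}$ (Lemma~\ref{lemmaprod}), then Theorem~\ref{degree_of_tensor_product} on degrees of tensor products, and finally the identification $\r/\r^2\cong\Z[G]\otimes R_{\ab}$ together with $R_{\ab}\hookrightarrow\f/\f\r$ embedding into an additive functor. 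All the structural complexity you are worried about---products, sums, intersections in arbitrary combinations---is bypassed in one line.

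In your proposal, sums and intersections are actually fine (and intersections are \emph{not} the main obstacle: $\f/(\c_1\cap\c_2)\hookrightarrow\f/\c_1\oplus\f/\c_2$, so closure of finite degree under subobjects finishes it). The soft spot is the product step. You write that ``the coderivative of a product picks up one fewer factor'', but that is a statement about $(\mathfrak m\cdot\mathfrak n)_{(1)}$, not about $(\f/\mathfrak m\mathfrak n)_{(1)}$; since $\f$ itself has infinite degree, knowing that $\c_{(k)}$ gets smaller does not by itself force $(\f/\c)_{(k)}=\f_{(k)}/\c_{(k)}$ to vanish---you need the inclusion $\c_{(k)}\hookrightarrow\f_{(k)}$ to become an equality. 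Making your induction go through for a monomial $\mathfrak m=\mathfrak a\cdot\mathfrak m'$ would require controlling the middle term of $\mathfrak a/\mathfrak a\mathfrak m'\hookrightarrow\f/\mathfrak a\mathfrak m'\twoheadrightarrow\f/\mathfrak a$ via a tensor decomposition so that Theorem~\ref{degree_of_tensor_product} applies; that is precisely what the paper does, but only \emph{after} reducing to $\r^n$, where the decomposition $\r^k/\r^{k+1}\cong(\r/\r^2)^{\otimes k}$ is clean and needs no further casework. Without that reduction your product step is not yet a proof.
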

\begin{proof}
Let $n$ be a minimal power of $\r$ such that $\r^n\subset \c$, then we have an epimorphism $\f/\r^n\twoheadrightarrow \f/\c$ which induces a surjection on the level of cochain complexes:
$$
Q\frac{\f}{{\bf r}^n}\BB\twoheadrightarrow Q\frac{\f}{{\bf c}}\BB
$$
and hence it is sufficient to prove finiteness of $\f/\r^n$. The sequence of the short exact sequences
\begin{equation}\label{fr_short_exact_sequence}
\r^n/\r^{n+1}\hookrightarrow \f/\r^{n+1}\twoheadrightarrow \f/\r^n
\end{equation}
starts with a constant functor $\f/\r=\g={\sf ker}\,\{\Z[G]\to\Z\}$ and the problem is reduced to the functors $\r^n/\r^{n+1} = (\r/\r^2)^{\otimes_{\Z[F]} n}$ (see Lemma \ref{lemmaprod}). Covering this tensor product by the tensor product over $\Z$ and applying Theorem \ref{degree_of_tensor_product}, only the case $n=1$ need to be shown. Note that $\r/\r^2$ is a free $\Z[G]$-module with basis formed by elements $r-1, \ r\in R$, see \cite{gruenberg2006cohomological}, hence a natural map $R_{ab}\to \r/\r^2, \ r\mapsto r-1$ factors through $\Z [G] \otimes R_{ab}\to \r/\r^2$ and this map is an isomorphism.

Finally, the functor $R_{ab} = \r/\f\r$ has a finite degree, since it is embedded in $\f/\f\r=\f\otimes_{\Z [F]}\Z [G]$ which is an additive functor. Indeed (see also \cite{weibel1995introduction}):
$$
\f(F*F') \otimes_{\Z [F*F']}\Z G= (\f(F)\otimes_{\Z [F]}\Z [F*F']\oplus\f(F')\otimes_{\Z [F']}\Z [F*F'])\otimes_{\Z [F*F']} \Z [G]
$$

$$
= \f(F)\otimes_{\Z [F]} \Z [G]\oplus \f(F')\otimes_{\Z [F']} \Z [G]
$$
which concludes the proof.
\end{proof}

\section{Dictionary}
In this section, we give a dictionary for all codes written on ${\bf fr}$-language which consist of words with length $\leq 3$. If one can not find a code in our table, this means that either it has trivial translation, i.e. all ${\sf lim}^i=0$, or has the same translation as its mirror image, which is in our dictionary. For example, the codes ${\bf rf+ffr}$ and ${\bf fr+rff}$ have the same translations. As mention in Introduction, by {\it translation} we mean a description of the functors ${\sf lim}^i\left({\bf fr}-\mathrm{code}\right)$, $i\geq 1$, ${\bf fr-}\mathrm{codes}$ viewed as functors from the category of free group presentations to the category of abelian groups.

We will omit the translation of simple codes given in \cite{ivanov2017fr}, like $\bf rr+fff$, or $\bf rr+frf,\ rrf+frr$.

 In construction of the dictionary, we will use the following statements.
\begin{lem}[Lemma 5.9 in \cite{ivanov2017fr}]\label{lemmaprod}
Let ${\bf a'\subset a,\ b'\subset b}$ be ideals of $\mathbb Z[F]$ and ${\sf Tor}(\mathbb Z[F]/{\bf a}, \mathbb Z[F]/{\bf b})=0$, then there is a natural isomorphism
$$
\frac{\bf a}{\bf a'}\otimes_{\Z[F]}\frac{\bf b}{\bf b'}=\frac{\bf ab}{\bf ab'+a'b}.
$$
\end{lem}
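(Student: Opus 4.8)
The plan is to run the standard "right-exact Tor" argument relating $\otimes$ to products of ideals, being careful that everything is over the noncommutative ring $\Z[F]$ and that the modules on the two sides of the tensor product live on different sides. First I would tensor the short exact sequences $\mathbf{a'}\hookrightarrow \mathbf{a}\twoheadrightarrow \mathbf{a}/\mathbf{a'}$ and $\mathbf{b'}\hookrightarrow \mathbf{b}\twoheadrightarrow \mathbf{b}/\mathbf{b'}$ with the relevant modules. Since $-\otimes_{\Z[F]}-$ is right exact in each variable, $\mathbf{a}\otimes_{\Z[F]}\mathbf{b}\twoheadrightarrow (\mathbf{a}/\mathbf{a'})\otimes_{\Z[F]}(\mathbf{b}/\mathbf{b'})$ with kernel generated by the images of $\mathbf{a'}\otimes_{\Z[F]}\mathbf{b}$ and $\mathbf{a}\otimes_{\Z[F]}\mathbf{b'}$, so the target is $\mathbf{a}\otimes_{\Z[F]}\mathbf{b}$ modulo those two submodules. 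Thus it suffices to identify $\mathbf{a}\otimes_{\Z[F]}\mathbf{b}$ with $\mathbf{a}\mathbf{b}$ naturally, compatibly with the inclusions of ideals, and then the quotient computation gives $\mathbf{a}\mathbf{b}/(\mathbf{a}\mathbf{b'}+\mathbf{a'}\mathbf{b})$.

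The core identification is the natural surjection $\mu:\mathbf{a}\otimes_{\Z[F]}\mathbf{b}\twoheadrightarrow \mathbf{a}\mathbf{b}$, $x\otimes y\mapsto xy$ (well-defined and $\Z[F]$-balanced since $\mathbf{a}$ is a right ideal and $\mathbf{b}$ a left ideal). To see $\mu$ is injective I would use the hypothesis $\tor^{\Z[F]}_1(\Z[F]/\mathbf{a},\Z[F]/\mathbf{b})=0$: from the long exact $\tor$-sequence for $\mathbf{a}\hookrightarrow\Z[F]\twoheadrightarrow\Z[F]/\mathbf{a}$ tensored with $\Z[F]/\mathbf{b}$ one gets that $\mathbf{a}\otimes_{\Z[F]}\Z[F]/\mathbf{b}\to \Z[F]/\mathbf{b}$ is injective, i.e. $\mathbf{a}\otimes_{\Z[F]}\Z[F]/\mathbf{b}\cong (\mathbf{a}+\mathbf{b})/\mathbf{b}$; equivalently, tensoring $\mathbf{a}\hookrightarrow\Z[F]$ with $\mathbf{b}\hookrightarrow\Z[F]$ and chasing shows $\mathbf{a}\otimes_{\Z[F]}\mathbf{b}\to\mathbf{a}\otimes_{\Z[F]}\Z[F]=\mathbf{a}$ has image exactly $\mathbf{a}\mathbf{b}$ and is injective. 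Concretely: the map $\mathbf{a}\otimes_{\Z[F]}\mathbf{b}\to\mathbf{a}$ fits into $\tor_1(\mathbf{a},\Z[F]/\mathbf{b})\to\mathbf{a}\otimes_{\Z[F]}\mathbf{b}\to\mathbf{a}\to\mathbf{a}\otimes_{\Z[F]}\Z[F]/\mathbf{b}\to 0$, and $\tor_1(\mathbf{a},\Z[F]/\mathbf{b})\cong\tor_2(\Z[F]/\mathbf{a},\Z[F]/\mathbf{b})$, which one must also see vanishes; this is where I would invoke that $\tor_{\geq 1}(\Z[F]/\mathbf{a},\Z[F]/\mathbf{b})=0$ follows from $\tor_1=0$ because $\Z[F]/\mathbf{a}$ and $\Z[F]/\mathbf{b}$ have projective dimension $\leq 1$ over $\Z[F]$ (ideals in $\Z[F]$ are free, $\Z[F]$ being a free ideal ring), so all higher $\tor$ automatically vanish and the single displayed hypothesis is the only obstruction. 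Hence $\mu$ is an isomorphism.

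Putting the pieces together: $(\mathbf{a}/\mathbf{a'})\otimes_{\Z[F]}(\mathbf{b}/\mathbf{b'})\cong \bigl(\mathbf{a}\otimes_{\Z[F]}\mathbf{b}\bigr)\big/\bigl(\im(\mathbf{a'}\otimes\mathbf{b})+\im(\mathbf{a}\otimes\mathbf{b'})\bigr)\cong \mathbf{a}\mathbf{b}/(\mathbf{a'}\mathbf{b}+\mathbf{a}\mathbf{b'})$, where the last step uses that $\mu$ carries $\im(\mathbf{a'}\otimes_{\Z[F]}\mathbf{b})$ onto $\mathbf{a'}\mathbf{b}$ and $\im(\mathbf{a}\otimes_{\Z[F]}\mathbf{b'})$ onto $\mathbf{a}\mathbf{b'}$ — this compatibility is immediate from the formula $x\otimes y\mapsto xy$ and naturality of $\mu$ in both ideal arguments. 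The main obstacle, and the only non-formal point, is justifying that the single hypothesis $\tor_1(\Z[F]/\mathbf{a},\Z[F]/\mathbf{b})=0$ forces injectivity of $\mu$, i.e. controlling the potentially-higher $\tor$ term $\tor_1(\mathbf{a},\Z[F]/\mathbf{b})$; this is handled by the global-dimension-one property of $\Z[F]$, after which the rest is bookkeeping with right-exactness. (Everything is natural in $F$ since all maps involved are induced by multiplication in $\Z[F]$ and the functorial inclusions of the ideals.)
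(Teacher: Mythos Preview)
This lemma is only quoted in the paper (from \cite{ivanov2017fr}); there is no proof here to compare against. Your reduction is the natural one and is set up correctly: by right-exactness one is reduced to showing that the multiplication map $\mu:\mathbf a\otimes_{\Z[F]}\mathbf b\to\mathbf{ab}$ is an isomorphism, and you correctly identify
\[
\ker\mu\;\cong\;\tor_1^{\Z[F]}(\mathbf a,\Z[F]/\mathbf b)\;\cong\;\tor_2^{\Z[F]}(\Z[F]/\mathbf a,\Z[F]/\mathbf b).
\]

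The gap is the sentence ``ideals in $\Z[F]$ are free, $\Z[F]$ being a free ideal ring.'' That is true for $k[F]$ with $k$ a \emph{field} (Cohn), but it fails over $\Z$: already for $F=\Z$ the ideal $(2,\,t-1)\subset\Z[t,t^{-1}]$ is not free (it is not principal, while a free ideal of rank one would be, and a rank count excludes higher rank). In fact $\Z[F]$ has global dimension $2$ for every nontrivial free $F$, so cyclic modules can have projective dimension $2$ and the single hypothesis $\tor_1(\Z[F]/\mathbf a,\Z[F]/\mathbf b)=0$ does not, by your route, force $\tor_2=0$. As written, the argument does not close.

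What rescues every use of the lemma in this paper is that in each application one of $\mathbf a,\mathbf b$ is $\f$ or $\r$, and these \emph{are} free as one-sided $\Z[F]$-modules (Fox derivatives give a basis of $\f$; the induced-module identification $\r\cong I(R)\otimes_{\Z[R]}\Z[F]$ gives one for $\r$), so the relevant $\tor_{\ge 2}$ vanishes for that reason rather than by any global property of $\Z[F]$. If the hypothesis is read as $\tor_{\ge 1}=0$, or strengthened to ``$\mathbf a$ (or $\mathbf b$) is flat as a one-sided $\Z[F]$-module'', your proof is complete; with only $\tor_1=0$ and the fir justification, it is not.
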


\begin{lem}\label{tensor_lemma}
For any functor $\mathcal F(F,R)$ and a non-constant functor $\mathcal H(F)$, which depends only on $F$, ${\sf lim}^i\mathcal F\otimes \mathcal H=0,\ i\geq 0$.
\end{lem}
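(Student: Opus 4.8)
The plan is to exploit Theorem \ref{higher_limits_as_cohomology}, which lets us replace the higher limits by the cohomotopy groups $\pi^i$ of the standard complex, together with the key structural fact that any functor depending only on $F$ is, up to the chosen presentation $c=(F\twoheadrightarrow G)$, an instance of the situation studied in Remark \ref{strongly_connected_is_essential}: evaluating $\mathcal H$ on the coproduct $\sqcup^{n+1}c$ gives $\mathcal H(F^{*\,n+1})$, i.e. a free-product construction, so that $\mathcal H\BB_G(c)$ looks like an Amitsur-type complex built from $\mathrm{Pres}$ via the underlying free group. First I would observe that the cosimplicial object $\BB_G(c)$, composed with the functor $(F\twoheadrightarrow G)\mapsto F$, is exactly the standard complex $\BB(F)$ taken in the category $\mr{Grp}$ of free groups (or more precisely the category $\mathrm{Pres}$ mapped down to $\mr{Grp}$ via $c\mapsto F$), and that $\mathcal H$ factors through this projection. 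Hence $\mathcal H\BB_G(c)\cong \mathcal H\BB(F)$, computed over the category of free groups with coproduct $*$, which \emph{has an initial object} (the trivial group $1$).

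The second step uses the remark already recorded after Definition \ref{defin_of_standard_complex}: in a category with an initial object $0$, the standard complex $\BB(c)$ is identified with the canonical (monadic) resolution, and consequently all higher limits of a functor $\mathcal G$ with $\mathcal G(0)=0$ vanish. But here I do not want $\mathcal H$ itself to vanish on $1$ — indeed a generic $\mathcal H$ need not — so instead I would argue as follows. Write $c_0$ for the presentation $1\twoheadrightarrow G$ appearing as a terminal-type object inside $\mathrm{Pres}(G)$ only when $G=1$; in general $1$ is not a presentation of $G$, so this needs care. The cleaner route, which I would actually take, is: since $\mathcal H$ depends only on $F$, the cosimplicial abelian group $\mathcal H\BB_G(c)$ is the coaugmented Amitsur-style complex associated to the free group $F$ and the free-product monad $F*(-)$ on free groups; this monad \emph{is} split-augmented (via the unique map to the trivial group, or rather via the fold map collapsing the extra factors), so the coaugmented complex $\mathcal H(F)\to \mathcal H\BB(F)$ is cosimplicially contractible — there is an extra codegeneracy, giving a contracting cosimplicial homotopy. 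Therefore $\pi^0\mathcal H\BB_G(c)=\mathcal H(F)$ and $\pi^i\mathcal H\BB_G(c)=0$ for $i\geq 1$; in particular, as a functor of the presentation, $\mathcal H$ is itself $\mathrm{lim}$-acyclic with $\mathrm{lim}^0\mathcal H=\mathcal H$ (this is where non-constancy of $\mathcal H$ enters: its value is not a constant functor, so $\mathrm{lim}\,\mathcal H$ is not $\mathcal H$, but $\mathrm{lim}^{\geq 1}\mathcal H$ still vanishes).

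The third step assembles the tensor product. Given the double-complex $C\mathcal F\BB_G(c)\otimes C\mathcal H\BB_G(c)$ and the Künneth-type argument of Theorem \ref{Kunneth_thm} (applied over $\OO=\Z$, so plain $\otimes_\Z$), I would run the spectral sequence of the double complex $C(\mathcal F\otimes\mathcal H)\BB_G(c)$ filtered so that the vertical differential is that of $\mathcal H\BB_G(c)$. Since $\mathcal H\BB_G(c)$ is contractible as computed above, after one application of the Moore functor $Q$ — which is exact — the entire $E_1$-page collapses: every column is acyclic. Hence $\pi^i(\mathcal F\otimes\mathcal H)\BB_G(c)=0$ for all $i\geq 0$, and by Theorem \ref{higher_limits_as_cohomology} this gives $\mathrm{lim}^i\,\mathcal F\otimes\mathcal H=0$ for all $i\geq 0$.

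I expect the main obstacle to be the first step: making precise that $\mathcal H\BB_G(c)$ really is the coaugmented complex of the free-product monad and exhibiting the extra codegeneracy that contracts it. The subtlety is that $\mathrm{Pres}(G)$ has no initial object, so one cannot simply cite the monadic identification; instead one has to observe that the structure maps of $\BB_G(c)$, after projecting to the free group coordinate, only ever use the fold and insertion maps among the free factors of $F*\dots*F$, and that these carry a cosimplicial contraction witnessing that $\mathcal H(F)\to\mathcal H(\BB_G(c))$ is a cosimplicial homotopy equivalence. Once that is in place, exactness of $Q$ and the double-complex spectral sequence finish the argument with no further calculation. One should also double-check the edge case where $\mathcal H$ is constant — excluded by hypothesis — since there $\mathrm{lim}^0$ picks up a nonzero contribution and the conclusion $\mathrm{lim}^i=0$ for $i=0$ would fail; the non-constancy hypothesis is used precisely to rule this out (a non-constant functor on $F$ composed into $\mathcal F\otimes\mathcal H$ kills all limits, constants do not).
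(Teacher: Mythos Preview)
The paper does not prove this lemma in the text (it is stated and implicitly deferred to \cite{ivanov2017fr}), so there is no in-paper argument to compare against directly. That said, your proposal has a genuine gap in the third step. The object $C(\mathcal F\otimes\mathcal H)\BB_G(c)$ is a \emph{single} cochain complex: its $n$-th term is $\mathcal F(c^{\sqcup n+1})\otimes\mathcal H(F^{*\,n+1})$ and its differential is $\sum_j(-1)^j\,\mathcal F(d^j)\otimes\mathcal H(d^j)$. There is no ``vertical differential coming from $\mathcal H\BB_G(c)$'' to filter by, and the contracting homotopy you produced on $\mathcal H\BB$ does not act on the $\mathcal F$-factor (the collapse map $F^{*\,n+1}\to F^{*\,n}$ is not a morphism in $\mr{Pres}(G)$). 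What would rescue the idea is to introduce the \emph{bi}cosimplicial object $(m,n)\mapsto \mathcal F(c^{\sqcup m+1})\otimes\mathcal H(F^{*\,n+1})$, identify its diagonal with $(\mathcal F\otimes\mathcal H)\BB_G(c)$ via Eilenberg--Zilber, and then run the spectral sequence of the associated double complex; only then does the column-wise contraction of $\mathcal H\BB$ buy you anything. You have not done this, and the sentence invoking Theorem~\ref{Kunneth_thm} with $\OO=\Z$ does not substitute for it.

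There is also a smaller error upstream: the extra codegeneracy (collapsing a free factor to the trivial group) contracts the \emph{coaugmented} complex $\mathcal H(1)\to\mathcal H(F)\to\mathcal H(F*F)\to\cdots$, so $\pi^0\mathcal H\BB_G(c)\cong\mathcal H(1)$, not $\mathcal H(F)$; this is forced anyway by Corollary~\ref{cohomology_independent_of_object}, since $\mathcal H(F)$ varies with $c$. Consequently even the corrected bicomplex argument only yields vanishing when $\mathcal H(1)=0$, which ``non-constant'' alone does not guarantee (take $\mathcal H=\Z\oplus F_{ab}$ and $\mathcal F=\Z$: then ${\sf lim}^0(\mathcal F\otimes\mathcal H)=\Z$). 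In the paper's applications $\mathcal H$ is always something like $F_{ab}$, for which $\mathcal H(1)=0$, so this is the hypothesis you should actually be using; your final paragraph misidentifies where non-constancy enters.
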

Similarly one can show (see \cite{ivanov2017fr}) that, for a $\bf fr$-code with all words started with $\bf f,$ all limits are zero.
\begin{lem}[Lemma 6 in \cite{mikhailov2019dimension}]\label{constant_functor_lemma}
Let $\mathcal F$ be a constant functor. Then any subfunctor $\mathcal G \hookrightarrow \mathcal F$ and any epimorphic image $\mathcal F \twoheadrightarrow \mathcal H$ are constant functors.
\end{lem}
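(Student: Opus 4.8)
The plan is to unwind the definitions, using that the ambient category (here $\mr{Pres}(G)$, and more generally any category for which ``constant'' is the appropriate notion) is strongly connected. A constant functor $\mathcal F$ amounts to the following data: a single abelian group $A$ with $\mathcal F(c)=A$ for every object $c$ and $\mathcal F(\varphi)=\id_A$ for every morphism $\varphi$. So the whole proof is a matter of showing that a subfunctor, respectively a quotient, of such a functor is forced to have identity structure maps too.

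For the subfunctor statement, let $\mathcal G\hookrightarrow\mathcal F$. Then each $\mathcal G(c)$ is a subgroup of $A$, and for $\varphi:c\to c'$ the map $\mathcal G(\varphi)$ is the restriction of $\mathcal F(\varphi)=\id_A$ to $\mathcal G(c)$; commutativity of the defining square of the monomorphism $\mathcal G\hookrightarrow\mathcal F$ forces this restriction to land in $\mathcal G(c')$, i.e. $\mathcal G(c)\subseteq\mathcal G(c')$. Since $\CC$ is strongly connected there is also a morphism $c'\to c$, which gives the reverse inclusion; hence $\mathcal G(c)=\mathcal G(c')$ for all $c,c'$ and every structure map of $\mathcal G$ is the identity. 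Thus $\mathcal G$ is constant.

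For the epimorphic image, I would reduce to the previous case. Epimorphisms in $\ab^{\CC}$ are computed objectwise, so if $p:\mathcal F\twoheadrightarrow\mathcal H$ is an epimorphism then each $p_c:A\twoheadrightarrow\mathcal H(c)$ is surjective; put $\mathcal K=\ker p$, a subfunctor of $\mathcal F$, so that $\mathcal H\cong\mathcal F/\mathcal K$. By the first part $\mathcal K$ is constant, say with value $K\subseteq A$ and identity structure maps; then $\mathcal H(c)=A/K$ for all $c$ and the structure map $\mathcal H(\varphi)$ is the map induced by $\mathcal F(\varphi)=\id_A$ on the quotient, namely $\id_{A/K}$. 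Hence $\mathcal H$ is constant. (One can also argue directly: surjectivity of $p_c$ determines $\mathcal H(\varphi)$ from $\mathcal F(\varphi)=\id$, and strong connectedness upgrades it to the identity, but factoring through the kernel is cleaner.)

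There is no real obstacle here; the whole statement is formal. The one point worth flagging is that strong connectedness is genuinely used: on a general category a subfunctor of a constant functor need only have monomorphic (not identity) structure maps, so the reverse inclusion $\mathcal G(c')\subseteq\mathcal G(c)$ truly requires a morphism in the opposite direction, and likewise the quotient statement fails without it.
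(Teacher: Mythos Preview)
Your argument is correct. The paper does not actually prove this lemma; it merely cites it as Lemma~6 in \cite{mikhailov2019dimension}, so there is no in-paper proof to compare against. Your direct unwinding of the definitions together with the use of strong connectedness to obtain the reverse inclusion $\mathcal G(c')\subseteq\mathcal G(c)$ is exactly the standard elementary argument, and your reduction of the quotient case to the subfunctor case via the kernel is clean and correct.
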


We will also use the spectral sequence \ref{lim_spectral_sequence}, especially applied to the 4-term complexes. For convenience, lets reformulate the statement about convergence of the spectral sequence \ref{lim_spectral_sequence} in a more explicit form. Let  $\mathcal F^\bullet$ be a complex of functors $\mr{Pres}(G)\to \mr{Ab}$
$$\dots \to \mathcal F^{n-1} \to \mathcal F^n \to \mathcal F^{n+1} \to \dots $$
Assume that $\mathcal F^\bullet$ is bounded below (i.e.  $\mathcal F^n=0 $ for $n<\!<0$) and that $H^n(\mathcal F^\bullet) $ is constant for any $n.$ Then there exists a converging  spectral sequence $E$ with differentials  $$d^r : E_r^{i,j} \longrightarrow E_{r}^{i+r,j-r+1}$$  such that
$$E_1^{i,j}={\rm lim}^j \mathcal F^i \Rightarrow H^{i+j}(\mathcal F^\bullet).$$

Now we proceed to the computations.
\vspace{.5cm}\noindent

{\bf rfr+frf}:\ Tensoring the short exact sequence $\frac{\bf r}{\bf fr}\hookrightarrow \frac{\bf f}{\bf fr}\twoheadrightarrow {\bf g}$ by $-\otimes\frac{\bf f}{\bf fr+rf}$ and taking the group homology $H_i(G,-)$, we get the long exact sequence
\begin{multline}\label{les1}
H_1\left(G, \frac{\bf f}{\bf fr}\otimes \frac{\bf f}{\bf fr+rf}\right)\to H_2\left(G, \frac{\bf f}{\bf fr+rf}\right)\to
\frac{\bf r}{\bf fr}\otimes_{\mathbb Z[G]}\frac{\bf f}{\bf fr+rf}\to\\ \to\frac{\bf f}{\bf fr}\otimes_{\mathbb Z[G]}\frac{\bf f}{\bf fr+rf}\twoheadrightarrow {\bf g}\otimes_{\mathbb Z[G]}\frac{\bf f}{\bf fr+rf}.
\end{multline}
Here we used the property that, for any $G$-module $M$, there is a natural isomorphism $H_1(G, {\bf g}\otimes M)=H_2(G, M)$.
Since $\f/\f\r$ is a free $\Z[G]$-module, $\frac{\bf f}{\bf fr}\otimes \frac{\bf f}{\bf fr+rf}$ is weak projective and hence $H_i\left(G, \frac{\bf f}{\bf fr}\otimes \frac{\bf f}{\bf fr+rf}\right)=0,\ i\geq 1$. By Lemma \ref{lemmaprod},
\begin{align*}
& \frac{\bf r}{\bf fr}\otimes_{\mathbb Z[G]}\frac{\bf f}{\bf fr+rf}=\frac{\bf rf}{\bf rfr+frf},\
& \frac{\bf f}{\bf fr}\otimes_{\mathbb Z[G]}\frac{\bf f}{\bf fr+rf}=\frac{\bf ff}{\bf ffr+frf}.
\end{align*}
And $\frac{\bf ff}{\bf ffr+frf}$ has trivial limits by Lemma \ref{tensor_lemma}. From a spectral sequence of Proposition \ref{lim_spectral_sequence} applied to a four-term exact sequence \eqref{les1}, it can be seen that

$$
{\sf lim}\frac{\bf rf}{\bf rfr+frf} = {\sf lim}^1({\bf rfr+frf})={\sf lim}\ H_2\left(G, \frac{\bf f}{\bf fr+rf}\right)
$$

and there is a short exact sequence
$$
{\sf lim}^1\ H_2\left(G, \frac{\bf f}{\bf fr+rf}\right)\hookrightarrow {\sf lim}^2({\bf rfr+frf})\twoheadrightarrow {\bf g}^2\otimes_{\mathbb Z[G]}\bf g.
$$
Here $\g\otimes_{\Z[G]} \g^2={\sf lim}(\g\otimes_{\Z[G]} \frac{\f}{\f\r+\r\f})={\sf lim}^1({\bf rf+ffr})$. To determine the ${\sf lim}$ and ${\sf lim}^1$ of $H_2(G,\frac{\bf f}{\bf fr+rf})$, consider the short exact sequence
$$
{\bf \frac{ff}{fr+rf}}\hookrightarrow {\bf \frac{f}{fr+rf}}\twoheadrightarrow {\bf \frac{f}{ff}}
$$
and the associated homology long exact sequence
\begin{equation}\label{les2}
H_3(G)\otimes F_{ab}\to H_2\left(G, {\bf g}\otimes_{\mathbb Z[G]}{\bf g}\right)\to H_2\left(G, \frac{\bf f}{\bf fr+rf}\right)\to H_2(G)\otimes F_{ab}\to
H_1\left(G, {\bf g}\otimes_{\mathbb Z[G]}{\bf g}\right)
\end{equation}

Any map from $H_n(G)\otimes F_{ab}$ to a constant functor (which depends only on $G$) factors through $H_n(G)\otimes G_{ab}$. This follows from elementary properties of colimits (see \cite{ivanov2018colimits}), namely
from ${\sf colim} (H_n(G)\otimes F_{ab})=H_n(G)\otimes G_{ab}$. Therefore, after truncating \eqref{les2} and applying Proposition \ref{lim_spectral_sequence} together with Lemma \ref{constant_functor_lemma} to it, we obtain
$$
{\sf lim}^1({\bf rfr+frf})={\sf coker}\{H_3(G)\otimes G_{ab}\to H_2\left(G, {\bf g}\otimes_{\mathbb Z[G]}{\bf g}\right)\}.
$$
and
$$
{\sf lim}^1\ H_2\left(G, \frac{\bf f}{\bf fr+rf}\right)={\sf im}\{H_2(G)\otimes G_{ab}\to H_1\left(G, {\bf g}\otimes_{\mathbb Z[G]}{\bf g}\right)\}.
$$
Hence, there is a short exact sequence
$$
{\sf im}\{H_2(G)\otimes G_{ab}\to H_1\left(G, {\bf g}\otimes_{\mathbb Z[G]}{\bf g}\right)\}\hookrightarrow {\sf lim}^2({\bf rfr+frf})\twoheadrightarrow {\bf g}^2\otimes_{\mathbb Z[G]}\bf g.
$$

\vspace{.5cm}\noindent
{\bf rr+frf+rff:} Consider the Gr\"unberg resolution which consists of free $\mathbb Z[G]$-modules:
$$
\dots \to \frac{\bf fr}{\bf frr}\to \frac{\bf r}{\bf rr}\to \frac{\bf f}{\bf fr}
$$
Tensoring it with $G_{ab}=\frac{\bf f}{\bf r+ff}$ over $\mathbb Z[G]$, we obtain the complex
$$
\dots \to \frac{\bf fr}{\bf frr}\otimes_{\mathbb Z[G]}\frac{\bf f}{\bf r+ff}\to \frac{\bf r}{\bf rr}\otimes_{\mathbb Z[G]}\frac{\bf f}{\bf r+ff}\to \frac{\bf f}{\bf fr}\otimes_{\mathbb Z[G]}\frac{\bf f}{\bf r+ff}
$$
which can be written, by Lemma \ref{lemmaprod} as
$$
\dots \to \frac{\bf frf}{\bf frr+frff}\to \frac{\bf rf}{\bf rr+rff}\to \frac{\bf ff}{\bf fr+fff}
$$
Hence, there is a natural isomorphism
$$
H_2(G, G_{ab})=\frac{\bf rf\cap (fr+fff)}{\bf rr+rff+frf}.
$$
For two ideals $I,J\subset \f$ there is a short exact sequence:
$$
\frac{\f}{I\cap J}\hookrightarrow 
\frac{\bf f}{I}\oplus \frac{\bf f}{J}\twoheadrightarrow \frac{\f}{I+J}
$$
And hence we get the following 4-term exact sequence
$$
H_2(G,G_{ab})\hookrightarrow \frac{\bf f}{\bf rr+rff+frf}\to
\frac{\bf f}{\bf fr}\oplus \frac{\bf f}{\bf fr+fff}\twoheadrightarrow G_{ab}\otimes G_{ab}.
$$
From the associated spectral sequence we obtain the identifications
\begin{align*}
& {\sf lim}^1({\bf rr+frf+rff})=H_2(G, G_{ab}),\\
& {\sf lim}^2({\bf rr+frf+rff})=G_{ab}\otimes G_{ab},\\
& {\sf lim}^i({\bf rr+frf+rff})=0,\ i\geq 3.
\end{align*}
Now observe that, the statement written in Introduction, that the transformation of the ${\bf fr}$-codes
$$
{\bf rr+frf}\rightsquigarrow {\bf rr+frf+rff}
$$
induces the natural transformation of functors
$$
H_3(G)={\sf lim}^1({\bf rr+frf})\rightsquigarrow H_2(G,G_{ab})={\sf lim}^1({\bf rr+frf+frr})
$$
follows immediately from the identifications 
$$
H_3(G)=\frac{{\bf rf}\cap {\bf fr}}{\bf rr+frf}\to \frac{\bf rf\cap (fr+fff)}{\bf rr+frf+rff}. 
$$

\vspace{.5cm}\noindent
{\bf rrf+rfr+frr:} Taking the tensor product over $\Z[G]$ of the Gr\"unberg resolution with $H_2(G)=\frac{\bf r\cap ff}{\bf fr+rf}$ and $\frac{\bf r}{\bf fr+rf}$ respectively, we obtain
\begin{align*}
& H_2(G,H_2(G))=\frac{\bf r(r\cap ff)\cap (ffr+frf)}{\bf fr(r\cap ff)+rrf+rfr},\\
& H_2\left(G, \frac{\bf r}{\bf fr+rf}\right)=\frac{\bf rr\cap (ffr+frf)}{\bf rrf+rfr+frr}.
\end{align*}
Since ${\sf lim}^1 {\bf rr}={\sf lim}^1({\bf ffr+frf})=0,$
\begin{equation}\label{z11}
{\sf lim}^1 ({\bf rrf+rfr+frr})={\sf lim}\ H_2\left(G, \frac{\bf r}{\bf fr+rf}\right).
\end{equation}
The natural map $H_2(G,H_2(G))\to H_2\left(G, \frac{\bf r}{\bf fr+rf}\right)$ is injective. Indeed, the above terms can be decomposed as
$$
0\to H_2(G)\otimes H_2(G)\to H_2(G, H_2(G))\to {\sf Tor}(G_{ab}, H_2(G))\to 0
$$
and
$$
0\to H_2(G)\otimes \frac{\bf r}{\bf fr+rf}\to H_2\left(G, \frac{\bf r}{\bf fr+rf}\right)\to {\sf Tor}\left(G_{ab}, \frac{\bf r}{\bf fr+rf}\right)\to 0.
$$
Using the fact that $\frac{\bf r}{\bf r\cap ff}={\sf coker}\{H_2(G)\hookrightarrow \frac{\r}{\f\r+\r\f}\}$ is torsion-free (since it is a subgroup of $\f/\f\f = F_{ab}$), we see that the natural map $H_2(G)\otimes H_2(G)\to H_2(G)\otimes \frac{\bf r}{\bf fr+rf}$ is injective and
${\sf Tor}(G_{ab}, H_2(G))\to {\sf Tor}\left (G_{ab}, \frac{\bf r}{\bf fr+rf}\right)$ is an isomorphism. The natural map $H_1(G, H_2(G))\to H_1\left(G, \frac{\bf r}{\bf fr+rf}\right)$ is also injective, by the same reason. Hence, we have the following short exact sequence
\begin{equation}\label{z12}
0\to H_2(G, H_2(G))\to H_2\left(G, \frac{\bf r}{\bf fr+rf}\right)\to H_2(G)\otimes \frac{\bf r}{\bf r\cap ff}\to 0.
\end{equation}
It follows that
$$
{\sf lim}^1H_2(G, \frac{\bf r}{\bf fr+rf}) = {\sf lim}^1 (H_2(G)\otimes \frac{\bf r}{\bf r\cap ff})
$$
To compute the latter one, we use K\"unneth theorem \ref{Kunneth_thm}, which in this case degenerates to
$$
{\sf lim}^1(H_2(G)\otimes \frac{\bf r}{\bf r\cap ff}) = H_2(G)\otimes {\sf lim}^1 \frac{\bf r}{\bf r\cap ff}  = H_2(G)\otimes G_{ab}
$$
Applying Proposition \ref{lim_spectral_sequence} to the 4-term exact sequence
$$
0\to {\sf Tor}(G_{ab}, H_2(G))\to H_2(G)\otimes \frac{\bf r}{\bf r\cap ff}\to H_2(G)\otimes \frac{\bf f}{\bf ff}\to H_2(G)\otimes G_{ab}\to 0,
$$
we obtain the following description
\begin{align*}
& {\sf lim}\ H_2(G)\otimes \frac{\bf r}{\bf r\cap ff}={\sf Tor}(G_{ab}, H_2(G)), \ &
& {\sf lim}^1\ H_2(G)\otimes \frac{\bf r}{\bf r\cap ff}=H_2(G)\otimes G_{ab}.
\end{align*}
The isomorphism (\ref{z11}) and the exact sequence (\ref{z12}) now imply that there exists the following natural short exact sequence
$$
0\to H_2(G, H_2(G))\to {\sf lim}^1({\bf rrf+rfr+frr})\to {\sf Tor}(G_{ab}, H_2(G))\to 0
$$

In order to understand ${\sf lim}^2({\bf rrf+rfr+frr}),$ consider the spectral sequence applied to the 4-term sequence
$$
0\to H_2\left(G, \frac{\bf r}{\bf rf+fr}\right)\to \frac{\bf rr}{\bf rrf+rfr+frr}\to \frac{\bf fr}{\bf frf+ffr}\to \frac{\bf fr}{\bf rr+frf+ffr}\to 0.
$$
Putting the values of ${\sf lim}^i({\bf rr+frf+ffr}), {\sf lim}^1 H_2\left(G, \frac{\bf r}{\bf rf+fr}\right)$ into the cells of the spectral sequence and noting that
${\sf lim}^2{\bf rr}={\bf g\otimes g},\ {\sf lim}^i{\bf rr}=0,\ i\neq 2,$ we obtain the following diagram which gives a description of ${\sf lim}^2({\bf rrf+rfr+frr})$ as a functor glued from three pieces
$$
\xyma{H_2(G)\otimes G_{ab}\ar@{>->}[d]\\ {\sf lim}^1\frac{\bf rr}{\bf rrf+rfr+frr}\ar@{>->}[r]\ar@{->>}[d] & {\sf lim}^2({\bf rrf+rfr+frr})\ar@{->>}[r] & {\sf ker}\{{\bf g}\otimes {\bf g}\to G_{ab}\otimes G_{ab}\}\\ H_2(G,G_{ab})}
$$

\vspace{.5cm}\noindent
{\bf ffr+rff:} First observe that,
\begin{equation}\label{xx1}
{\sf lim}^i({\bf ffr+rff+ffff})=0, i\geq 0.
\end{equation}
This follows from the isomorphism
$$
G_{ab}\otimes F_{ab}\otimes G_{ab}=\frac{\bf f}{\bf r+ff}\otimes \frac{\bf f}{\bf ff}\otimes \frac{\bf f}{\bf r+ff}=
$$
$$
=\frac{\bf f}{\bf r+ff}\otimes_{Z[F]} \frac{\bf f}{\bf ff}\otimes_{\Z[F]} \frac{\bf f}{\bf r+ff}=\frac{\bf ff}{\bf rf+fff}\otimes \frac{\bf f}{\bf r+ff}=\frac{\bf fff}{\bf ffr+rff+ffff}.
$$
Consider the following exact sequence
$$
\frac{\bf ff}{\bf r\cap ff}\otimes_{\mathbb Z[G]} \frac{\bf ff}{\bf r\cap ff}\to \frac{\bf fff}{\bf ffr+rff}\to \frac{\bf fff}{\bf ffr+rff+ffff}\to 0.
$$
The left hand term is ${\bf g}^2\otimes_{\mathbb Z[G]}{\bf g}^2$. Since an epimorphic image of a constant functor is a constant functor, (\ref{xx1}) implies that
\begin{align*}
& {\sf lim}^1({\bf ffr+rff})=\frac{{\bf g}^2\otimes_{\mathbb Z[G]}{\bf g^2}}{\sim}, \ &
& {\sf lim}^i({\bf ffr+rff})=0,\ i\geq 2,
\end{align*}
where $\frac{{\bf g}^2\otimes_{\mathbb Z[G]}{\bf g^2}}{\sim}$ is the image of the left hand map in the last exact sequence, i.e.
$$
\frac{{\bf g}^2\otimes_{\mathbb Z[G]}{\bf g^2}}{\sim}=\frac{\bf ffff}{\bf (ffr+rff)\cap ffff}.
$$

\vspace{.5cm}\noindent
{\bf rr+ffr+rff:} Define one more quotient of ${\bf g}^2\otimes_{\mathbb Z[G]}{\bf g^2}$ as follows\footnote{Observe that, there exists a natural exact sequence $${\sf Tor}(G_{ab},G_{ab})\to {\bf g}^2\otimes_{\mathbb Z[G]}{\bf g^2}\to \frac{{\bf g}^2\otimes_{\mathbb Z[G]}{\bf g^2}}{\approx}\to 0.$$}:
$$
\frac{{\bf g}^2\otimes_{\mathbb Z[G]}{\bf g^2}}{\approx}:=\frac{\bf (r+ff)^2}{\bf rr+rff+ffr}.
$$
There is a natural epimorphism
$\frac{{\bf g}^2\otimes_{\mathbb Z[G]}{\bf g^2}}{\sim}\twoheadrightarrow \frac{{\bf g}^2\otimes_{\mathbb Z[G]}{\bf g^2}}{\approx}.$ The short exact sequence
$$
0\to \frac{{\bf g}^2\otimes_{\mathbb Z[G]}{\bf g^2}}{\approx}\to \frac{\bf ff}{\bf rr+ffr+rff}\to \frac{\bf ff}{\bf (r+ff)^2}\to 0
$$
implies that
$$
{\sf lim}^2({\bf rr+ffr+rff})={\sf lim}^2({\bf (r+ff)}^2)
$$
and there is an exact sequence
$$
0\to \frac{{\bf g}^2\otimes_{\mathbb Z[G]}{\bf g^2}}{\approx}\to {\sf lim}^1(\bf rr+ffr+rff)\to {\sf lim}^1({\bf (r+ff)}^2)\to 0.
$$
Now consider the short exact sequence
$$
0\to \frac{\bf f}{\bf r+ff}\otimes \frac{\bf ff}{\bf fr+fff}\to \frac{\bf f}{\bf r+ff}\otimes\frac{\bf ff+r}{\bf fr+fff}\to \frac{\bf f}{\bf r+ff}\otimes \frac{\bf ff+r}{\bf ff}\to 0
$$
The left hand term has zero limits, since it is isomorphic to $G_{ab}\otimes F_{ab}\otimes G_{ab}$. Since the diagonal action of $F$ on the middle and the right head terms are trivial, they are isomorphic to $\frac{\bf f(ff+r)}{\bf (r+ff)^2}$ and $\frac{\bf f(ff+r)}{\bf rr+fff}$ respectively. Hence,
\begin{align*}
& {\sf lim}^1({\bf (r+ff)}^2)={\sf lim}^1({\bf rr+fff})={\sf Tor}(G_{ab}, G_{ab}),\\
& {\sf lim}^2({\bf (r+ff)}^2)={\sf lim}^2({\bf rr+fff})=G_{ab}\otimes G_{ab}.
\end{align*}
We obtain the needed description
$$
{\sf lim}^2({\bf rr+ffr+rff})=G_{ab}\otimes G_{ab}
$$
and the short exact sequence
$$
0\to \frac{{\bf g}^2\otimes_{\mathbb Z[G]}{\bf g^2}}{\approx}\to {\sf lim}^1({\bf rr+ffr+rff})\to {\sf Tor}(G_{ab}, G_{ab})\to 0.
$$

\vspace{.5cm}\noindent
{\bf frr+rfr:} There is an isomorphism
$$
\frac{\bf ff}{\bf fr+rf}\otimes_{\mathbb Z[F]}{\bf r}=\frac{\bf ffr}{\bf frr+rfr}.
$$
This is a particular case of the functor $A\otimes_{\mathbb Z[F]}{\bf r}$, where $A$ is a constant, in this case $A=\g\otimes_{\Z G} \g$.
Since ${\sf lim}^{\bullet}\mathbb Z[F]=\mathbb Z$ is of finite global dimension and $\bf r$ is a free $\mathbb Z[F]$ -module, K\"unneth theorem 3.2
can be applied to $A\otimes_{\mathbb  Z[F]} \bf r$, and it degenerates to a series of usual K\"unneth short exact sequences
$$
\bigoplus_{i+j=n}{\sf lim}^i A\otimes {\sf lim}^j {\bf r} \hookrightarrow {\sf lim}^n A\otimes_{\mathbb Z[F]}{\bf r}\twoheadrightarrow \bigoplus_{i+j=n+1}{\sf Tor}
({\sf lim}^i A, {\sf lim}^j {\bf r})
$$
which computes the only non-trivial higher limit as
$$
\sf lim^1 A\otimes_{\mathbb Z[F]}{\bf r} = A\otimes {\bf g}.
$$
In this way, we obtain the description
\begin{align*}
& {\sf lim}^2({\bf frr+rfr})=({\bf g}\otimes_{\mathbb Z[G]}{\bf g})\otimes {\bf g}, &
& {\sf lim}^i({\bf frr+rfr})=0,\ i\neq 2.
\end{align*}
In the same way we have
\begin{align*}
& {\sf lim}^2({\bf rr+ffr})=G_{ab}\otimes {\bf g},  &
& {\sf lim}^i({\bf rr+ffr})=0,\ i\neq 2.
\end{align*}

\vspace{.5cm}\noindent
{\bf rff+frr:} There is an isomorphism
$$
\frac{\bf r}{\bf fr+rf}\otimes \frac{\bf r+ff}{\bf ff}=\frac{\bf rr+rff}{\bf rff+frr}.
$$
We have the following descriptions of the limits of above terms
\begin{align*}
& {\sf lim}\frac{\bf r}{\bf fr+rf}=H_2(G),&
& {\sf lim}^1\frac{\bf r}{\bf fr+rf}=G_{ab},&
& {\sf lim}^i\frac{\bf r}{\bf fr+rf}=0,\ i\geq 2
\end{align*}
and
\begin{align*}
& {\sf lim}^1 \frac{\bf r+ff}{\bf ff}=G_{ab},&
& {\sf lim}^i \frac{\bf r+ff}{\bf ff}=0,\ i\neq 1.
\end{align*}
As noted before, the abelian group $\frac{\bf r+ff}{\bf ff}={\bf \frac{r}{r\cap ff}}$ is torsion-free, hence the K\"unneth formula implies the following
\begin{align*}
& {\sf lim}\frac{\bf rr+rff}{\bf rff+frr}={\sf Tor}(H_2(G), G_{ab}),\\
& H_2(G)\otimes G_{ab} \hookrightarrow {\sf lim}^1\frac{\bf rr+rff}{\bf rff+frr}\twoheadrightarrow {\sf Tor}(G_{ab}, G_{ab}),\\
& {\sf lim}^2\frac{\bf rr+rff}{\bf rff+frr}=G_{ab}\otimes G_{ab},\\
& {\sf lim}^i\frac{\bf rr+rff}{\bf rff+frr}=0,\ i\geq 3.
\end{align*}
Comparing this description with the values of ${\sf lim}^i({\bf rr+rff})={\sf lim}^i({\bf rr+ffr}),$ we obtain the following:
\begin{align*}
& {\sf lim}^1({\bf rff+frr})={\sf Tor}(H_2(G), G_{ab}),\\
& F\hookrightarrow {\sf lim}^2({\bf rff+frr})\twoheadrightarrow {\sf ker}\{{\bf g}\otimes G_{ab}\twoheadrightarrow G_{ab}\otimes G_{ab}\},\\
& H_2(G)\otimes G_{ab}\hookrightarrow F\twoheadrightarrow {\sf Tor}(G_{ab}, G_{ab}).
\end{align*}

\vspace{.5cm}\noindent
{\bf  ffr+frf+rff+rr:} Consider the short exact sequence
$$
\frac{\bf fff}{\bf ffr+frf+rff+rr\cap fff}\hookrightarrow \frac{\bf ff}{\bf ffr+frf+rff+rr}\twoheadrightarrow \frac{\bf ff}{\bf rr+fff}.
$$
The left hand term is a natural quotient of ${\bf g}\otimes_{\mathbb Z[G]}{\bf g}\otimes_{\mathbb Z[G]}\bf g$, hence,
\begin{align*}
& {\sf lim}^2({\bf ffr+frf+rff+rr})={\sf lim}^2({\bf rr+fff})=G_{ab}\otimes G_{ab},\\
& {\sf lim}^2({\bf ffr+frf+rff+rr})=0,\ i\geq 3.
\end{align*}
Next observe that,
$$
{\bf rr\cap fff}\subset {\bf ffr+frf+rff}.
$$
This follows from the identification of the intersection of augmentation ideals\footnote{In the free group ring $\mathbb Z[F],$ ${\bf rr}=\Delta^2(R)+{\bf rrf}.$}
$\Delta^2(R)\cap {\bf fff}=\Delta^3(R)+\Delta(R)\Delta(R\cap [F,F])+\Delta([R,R]\cap [[F,F],F])$
and the identity $R'\cap \gamma_3(F)=[R\cap F', R]$\footnote{A simple proof of this identity is the following. Observe that, $\Lambda^2(R/(R\cap [F,F]))=\frac{[R,R]}{[R,R\cap [F,F]]},$ where $\Lambda^2$ is the exterior square, and $\Lambda^2(F_{ab})=[F,F]/[[F,F],F]$. Now the needed identity follows from the inclusion $\Lambda^2(R/(R\cap [F,F]))\hookrightarrow \Lambda^2(F_{ab}),$ which is induced by the inclusion $R/R\cap [F,F]\hookrightarrow F_{ab}$.}, see \cite{karan2002some}. Hence, the left hand term in the above short exact sequence is ${\bf g}\otimes_{\mathbb Z[G]}{\bf g}\otimes_{\mathbb Z[G]}\bf g$ itself
and we have a short exact sequence
$$
{\bf g}\otimes_{\mathbb Z[G]}{\bf g}\otimes_{\mathbb Z[G]}{\bf g}\hookrightarrow {\sf lim}^1({\bf ffr+frf+rff+rr})\twoheadrightarrow {\sf Tor}(G_{ab}, G_{ab}).
$$
We collect the results in the following table. By $F"\oplus"G$ we mean an extension of the form $F\hookrightarrow * \twoheadrightarrow G$. 

\newpage 

\vspace{.5cm}
\makebox[\linewidth]{
\begin{tabular}{|r||c|c|c|c|c|c|}
\hline  {\bf fr-code} & ${\bf lim}^1$&${\bf lim}^2$&${\bf
lim^3}$
\\ \hline {\bf r} & ${\bf g}$ & 0 & 0
\\ \hline {\bf rr} & 0 & ${\bf g}\otimes {\bf g}$ & 0
\\ \hline {\bf rrr} & 0 & 0 & ${\bf g}\otimes {\bf g}\otimes {\bf g}$
\\ \hline {\bf fr+rf} & ${\bf g}\otimes_{\mathbb Z[G]}{\bf g}$ & 0 & 0
\\ \hline {\bf ffr+frf+rff} & ${\bf g}\otimes_{\mathbb Z[G]}{\bf g}\otimes_{\mathbb Z[G]}{\bf g}$ & 0 & 0
\\ \hline {\bf r+ff} & $G_{ab}$ & 0 & 0
\\ \hline {\bf r+fff} & ${\bf g}/{\bf g}^3$ & 0 & 0
\\ \hline {\bf rf+ffr} & ${\bf g}^2\otimes_{G}{\bf g}$ & 0 & 0
\\ \hline {\bf fr+rf+fff} & $G_{ab}\otimes G_{ab}$ & 0 & 0
\\ \hline {\bf rr+fff} & ${\sf Tor}(G_{ab},G_{ab})$ & $G_{ab}\otimes G_{ab}$ & 0
\\ \hline {\bf rr+frf} & $H_3(G)$ & ${\bf g}\otimes_{G}{\bf g}$ & 0
\\ \hline {\bf rrf+frr} & $H_4(G)$ & $({\bf g}\otimes {\bf g}\otimes {\bf g})_G$ &
0
\\ \hline {\bf rfr+frf} & ${\sf coker}\{H_3(G)\otimes G_{ab}\to $ &
${\sf im}\{H_2(G)\otimes G_{ab}\to$  & 0
\\ & $H_2\left(G, {\bf g}\otimes_{\mathbb Z[G]}{\bf g}\right)\}$ & $H_1\left(G, {\bf g}\otimes_{\mathbb Z[G]}{\bf g}\right)\} "\oplus" {\bf g}^2\otimes_{\mathbb Z[G]}\bf g$ &
\\ \hline {\bf rff+ffr} & $\frac{{\bf g}^2\otimes_{\mathbb Z[G]}{\bf g}^2}{\sim}$ & 0 & 0
\\ \hline {\bf rr+frf+rff} & $H_2(G, G_{ab})$ & $G_{ab}\otimes G_{ab}$ & 0
\\ \hline {\bf rr+ffr} & 0 & $G_{ab}\otimes {\bf g}$& 0
\\ \hline {\bf rfr+frr} & 0 & $({\bf g}\otimes_{\mathbb Z[G]}{\bf g})\otimes {\bf g}$ & 0
\\ \hline {\bf rr+ffr+rff} & $\frac{{\bf g}^2\otimes_{\mathbb Z[G]}{\bf g}^2}{\approx}\ "\oplus"\ {\sf Tor}(G_{ab},G_{ab})$ & $G_{ab}\otimes G_{ab}$ &  0
\\ \hline {\bf rr+ffr+frf+rff} & ${\bf g}\otimes_{\mathbb Z[G]}{\bf g}\otimes_{\mathbb Z[G]}{\bf g}\ "\oplus"\ {\sf Tor}(G_{ab},G_{ab})$& $G_{ab}\otimes G_{ab}$ & 0
\\ \hline {\bf rff+frr} & ${\sf Tor}(H_2(G), G_{ab})$ & $H_2(G)\otimes G_{ab}\ "\oplus"\ {\sf Tor}(G_{ab}, G_{ab})$ & 0
\\ & & $"\oplus"\ {\sf ker}\{{\bf g}\otimes G_{ab}\twoheadrightarrow G_{ab}\otimes G_{ab}\}$ &
\\ \hline {\bf rrf+rfr+frr} & $H_2(G,H_2(G))\ "\oplus"\ {\sf Tor}(G_{ab}, H_2(G))$ & $H_2(G)\otimes G_{ab}\ "\oplus"\ H_2(G,G_{ab})$ & 0\\
& & $"\oplus"\ {\sf ker}\{{\bf g\otimes g}\to G_{ab}\otimes G_{ab}\}$ & \\
\hline
\end{tabular}
}

\printbibliography	
\end{document}